\newtheorem{theorem}{Theorem}[section]
\newtheorem{lemma}[theorem]{Lemma}
\newtheorem{proposition}[theorem]{Proposition}
\newtheorem{corollary}[theorem]{Corollary}
\theoremstyle{definition}
\newtheorem{remark}[theorem]{Remark}
\newcommand{\excise}[1]{}
\renewcommand{\and}{\qquad\text{and}\qquad}
\newcommand{\bin}[2]{
 \left(
   \begin{array}{@{}c@{}}
      #1 \\ #2
  \end{array}
 \right)  }
\newcommand{\N}{\mathbb{N}}
\newcommand{\SQSP}{\mathcal{S}}
\newcommand{\Des}{\operatorname{Des}}
\newcommand{\RDes}{\operatorname{RDes}}
\begin{document}
\spacing{1.2}
\noindent{\Large\bf The combinatorics behind the leading Kazhdan--Lusztig coefficients of braid matroids}\\

\noindent{\bf Alice L.L. Gao}\footnote{Supported by the National Science Foundation of China (11801447) and by the Natural Science Foundation of Shaanxi Province (2020JQ-104).}\\
School of Mathematics and Statistics, Northwestern Polytechnical University, Xi'an, Shaanxi 710072, China\\
llgao@nwpu.edu.cn
\vspace{.1in}

\noindent{\bf Nicholas Proudfoot}\footnote{Supported by NSF grants DMS-1954050, DMS-2039316, and DMS-2053243.}\\
Department of Mathematics, University of Oregon, Eugene, OR 97403, USA\\
njp@uoregon.edu

\vspace{.1in}

\noindent{\bf Arthur L.B. Yang}\footnote{Supported in part by the Fundamental Research Funds for the Central Universities and the National Natural Science Foundation of China (11971249 and 12325111).}\\
Center for Combinatorics, LPMC, Nankai University, Tianjin 300071, China\\
yang@nankai.edu.cn
\vspace{.1in}

\noindent{\bf Zhong-Xue Zhang}\\
Center for Combinatorics, LPMC, Nankai University, Tianjin 300071, China\\
zhzhx@mail.nankai.edu.cn
\\

{\small
\begin{quote}
\noindent {\em Abstract.}
Ferroni and Larson gave a combinatorial interpretation of the braid Kazhdan--Lusztig polynomials in terms of series-parallel matroids. As a consequence, they confirmed an explicit formula for the leading Kazhdan--Lusztig coefficients of braid matroids with odd rank, as conjectured by Elias, Proudfoot, and Wakefield. Based on Ferroni and Larson's work, we further explore the combinatorics behind the leading Kazhdan--Lusztig coefficients of braid matroids. The main results of this paper include an explicit formula for the leading Kazhdan--Lusztig coefficients of braid matroids with even rank, a simple expression for the number of simple series-parallel matroids of rank $k+1$ on $2k$ elements, and explicit formulas for the leading coefficients of inverse Kazhdan--Lusztig polynomials of braid matroids. The binomial identity for the Abel polynomials plays an important role in the proofs of these formulas.
\end{quote} }

\noindent \emph{AMS Classification 2020:} {05B35, 05A15, 05A19} % 05B35 Primary

\noindent \emph{Keywords:} series-parallel matroids, braid Kazhdan--Lusztig polynomials, triangular cacti, Husimi trees, Abel polynomials.

\section{Introduction}

Given a matroid $M$ of positive rank, the {\bf Kazhdan--Lusztig polynomial} $P_M(t)\in\N[t]$ is a polynomial with non-negative integer coefficients of degree strictly less than half the rank of $M$ \cite{EPW,BHMPW2}.
Though these polynomials are characterized by a simple recursive formula, computing them in practice for explicit families of matroids is often quite difficult.
Let $B_n$ be the {\bf braid matroid}, associated with the complete graph on $n$ vertices.  Ferroni and Larson \cite{FL-braid} gave a beautiful combinatorial interpretation of the coefficients of $P_{B_n}(t)$, which we now describe.

A matroid is called {\bf quasi series-parallel} if
it is isomorphic to a direct sum of loops and matroids associated with series-parallel graphs.
Equivalently, it is quasi series-parallel if
it does not contain any minor isomorphic to the braid matroid $B_4$ or the uniform matroid of rank two on four elements \cite[Proposition 2.6]{FL-braid}. For a finite set $E$, let $\SQSP(E,k)$ denote the set of simple quasi series-parallel matroids of rank $k$ on $E$, and write $\SQSP(n,k):=\SQSP([n],k)$, where $[n]=\{1,\,2,\,\ldots,\,n\}$. Ferroni and Larson's combinatorial interpretation of $P_{B_n}(t)$ is as follows.

\begin{theorem}{\em\cite[Theorem 1.1]{FL-braid}}\label{braid and sqsp}
For any positive integer $n$, the coefficient of $t^i$ in the Kazhdan--Lusztig polynomial $P_{B_n}(t)$ is equal to the cardinality of the set $\SQSP(n-1,n-1-i)$.
\end{theorem}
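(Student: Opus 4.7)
The plan is to proceed by induction on $n$, using the defining recursion of the Kazhdan--Lusztig polynomial. The base cases are immediate: $P_{B_1}(t) = P_{B_2}(t) = 1$, matching $|\SQSP(0,0)| = |\SQSP(1,1)| = 1$.

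For the inductive step, I would exploit three structural facts about the braid matroid. First, the lattice of flats of $B_n$ is the set partition lattice of $[n]$; a flat $F$ corresponds to a set partition $\pi$ with blocks of sizes $\lambda_1,\dots,\lambda_k$. Second, the localization of $B_n$ at such an $F$ is the direct sum $B_{\lambda_1} \oplus \cdots \oplus B_{\lambda_k}$, while the contraction $B_n/F$ is isomorphic to $B_k$. Third, both the Kazhdan--Lusztig polynomial and the characteristic polynomial are multiplicative over direct sums, and the braid characteristic polynomial factors as $\chi_{B_m}(t) = (t-1)(t-2)\cdots(t-m+1)$. Plugging these into the KL recursion rewrites $P_{B_n}(t)$ as a sum, indexed by set partitions of $[n]$, of explicit polynomial expressions built out of smaller $P_{B_{\lambda_j}}(t)$'s together with products of linear factors.

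On the combinatorial side, I would establish that the sequence $|\SQSP(n-1, n-1-i)|$ satisfies the same recursion. Since a simple quasi series-parallel matroid is, by definition, a direct sum of loops and simple series-parallel matroids---and being simple rules out loops---every element of $\SQSP(n-1,n-1-i)$ decomposes canonically into its connected (series-parallel) components. This suggests a natural exponential-type recursion: fix a set partition $\pi$ of $[n-1]$ (or, after augmenting by a distinguished point, of $[n]$) encoding the component partition, then choose a simple series-parallel matroid on each block and track the total rank. The main task is then to identify this component-partition recursion, term by term, with the KL recursion obtained above, using the known enumeration of simple series-parallel matroids via trees and cacti (cf.\ the keywords of this paper) and the forbidden-minor characterization $(U_{2,4}$ and $B_4)$ given by Ferroni--Larson.

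The principal obstacle is matching the characteristic polynomial factor $\chi_{B_m}(t) = (t-1)\cdots(t-m+1)$ on the KL side with the right enumerative quantity on the QSP side, particularly the interaction between the $\lambda_j$-sized blocks and the ``top-level'' $B_k$ coming from the contraction. I expect this to be the step requiring a genuine combinatorial identity rather than a direct bijection: Whitney-number data hidden in $\chi_{B_m}(t)$ must be matched to refined counts of simple series-parallel matroids on the blocks. A careful bookkeeping of degrees (to read off the coefficient of $t^i$) then closes the induction, and I would anticipate Abel-type binomial identities of the kind advertised in the abstract to appear at precisely this juncture, consolidating the two sides into a single combinatorial recurrence.
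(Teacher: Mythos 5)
This statement is not proved in the paper you are commenting on: it is quoted verbatim as \cite[Theorem 1.1]{FL-braid} and used as a black box, so there is no internal proof to compare against. Judged on its own terms, your proposal is a strategy outline rather than a proof, and the gap sits exactly where you place it yourself. The structural inputs you list are correct --- $\mathscr{L}(B_n)$ is the partition lattice, $B_n|_F\cong B_{\lambda_1}\oplus\cdots\oplus B_{\lambda_k}$, $B_n/F\cong B_k$, multiplicativity of $P$ and $\chi$, and $\chi_{B_m}(t)=(t-1)\cdots(t-m+1)$ --- but the entire content of the theorem is the assertion that the numbers $|\SQSP(n-1,n-1-i)|$ satisfy the defining recursion $\sum_F\chi_{M|_F}(t)\,P_{M/F}(t)=t^{\operatorname{rk}M}P_M(t^{-1})$ for $M=B_n$. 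You describe this as ``the main task'' and ``the principal obstacle'' and say you ``expect'' an identity to close it, but you do not produce that identity, nor a mechanism (bijective or generating-function) that would produce it. Without it the induction does not close.

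Two further points you would need to address even with that identity in hand. First, the recursion characterizes $P_M$ only together with the degree bound $\deg P_M<\tfrac12\operatorname{rk}M$; to run the uniqueness argument you must also verify that $\SQSP(n-1,n-1-i)=\emptyset$ for $i\geq\tfrac{n-1}{2}$ (this does follow from the fact that a simple series-parallel matroid of rank $r$ has at most $2r-1$ elements, but it has to be said). Second, the candidate sequence $|\SQSP(n-1,n-1-i)|$ refers to matroids on $n-1$ elements while the flats of $B_n$ are partitions of $[n]$; your parenthetical ``after augmenting by a distinguished point'' gestures at the required bookkeeping but does not carry it out, and this offset is precisely where sign and binomial-coefficient bookkeeping tends to go wrong. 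In short: correct scaffolding, but the theorem itself --- the matching of the two recursions --- is assumed rather than proved.
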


The matroid $B_n$ has rank $n-1$, therefore the polynomial $P_{B_{2n}}(t)$ has degree at most $n-1$, while the polynomial $P_{B_{2n-1}}(t)$ has degree at most $n-2$.
Using Theorem \ref{braid and sqsp}, Ferroni and Larson gave the following explicit formula for the leading coefficient of $P_{B_{2n}}(t)$, which was originally conjectured in \cite[Appendix]{EPW}.

\begin{theorem}{\em\cite[Corollary 2.12]{FL-braid}}\label{odd top}
For any $n > 1$, the coefficient of $t^{n-1}$ in $P_{B_{2n}}(t)$ is equal to
$$|\SQSP(2n-1,n)| = (2n-1)^{n-2}\cdot (2n-3)!!.$$
\end{theorem}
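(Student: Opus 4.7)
The plan is to combine Theorem \ref{braid and sqsp} with a direct enumeration of the simple quasi series-parallel matroids of rank $n$ on $2n-1$ elements. By that theorem, the coefficient of $t^{n-1}$ in $P_{B_{2n}}(t)$ equals $|\SQSP(2n-1, n)|$, so it suffices to show that this cardinality is $(2n-1)^{n-2}(2n-3)!!$.

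First I would prove a connectedness constraint: writing $M \in \SQSP(2n-1, n)$ as a direct sum $M_1 \oplus \cdots \oplus M_k$ of simple series-parallel summands of ranks $r_i$ on $e_i$ elements, the inequality $e_i \leq 2r_i - 1$ (since a simple series-parallel graph on $v$ vertices has at most $2v - 3$ edges) yields
\[
2n - 1 \;=\; \sum_{i=1}^k e_i \;\leq\; \sum_{i=1}^k (2r_i - 1) \;=\; 2n - k,
\]
forcing $k = 1$ and $e_1 = 2r_1 - 1 = 2n - 1$. Thus $M$ is the graphic matroid of a maximal simple series-parallel graph on $n+1$ vertices, i.e., a $2$-tree on $n+1$ vertices whose $2n-1$ edges carry the labels of $[2n-1]$ bijectively, considered modulo graph isomorphism.

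Second, I would enumerate these edge-labeled $2$-trees. A $2$-tree on $n+1$ vertices is built from a single triangle by $n-2$ successive operations, each attaching a new vertex to an existing edge by two new edges; it contains $n-1$ triangles and always admits at least one \emph{ear}, i.e., a triangle meeting the rest of the $2$-tree in a single edge. Peeling an ear removes a leaf-vertex together with its two labeled edges and records the label of the edge to which the ear was attached. Choosing a canonical peeling rule (say, always remove the ear whose unique private vertex has the smallest label under a fixed linear order) turns the iterated peeling into a map from edge-labeled $2$-trees to pairs consisting of a sequence of $n-2$ attaching-labels together with a pairing of the remaining $2(n-1)$ labels into triangle-sides; I would then argue that this map is a bijection onto a set of size $(2n-1)^{n-2}(2n-3)!!$. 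Equivalently, one can set up the exponential generating function for edge-labeled $2$-trees and solve the resulting functional equation using an Abel identity, which is the technique flagged by the abstract as central to the paper.

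The main obstacle is extracting the clean product $(2n-1)^{n-2}(2n-3)!!$ rather than a messy sum: a $2$-tree typically has several ears, and the pool of labels available at a given peeling step varies, so neither factor of the product appears naively during the construction. Overcoming this requires either a clever canonical rule combined with a nontrivial counting argument, or a functional-equation approach in which the product structure emerges through a change of variables coming from an Abel identity. I would expect the generating-function route to be the more robust, matching the paper's emphasis on Abel-type identities.
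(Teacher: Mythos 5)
Your opening reduction is fine: Theorem \ref{braid and sqsp} reduces the claim to computing $|\SQSP(2n-1,n)|$, and your parity argument forcing $k=1$ (so that $M$ is connected and edge-maximal series-parallel) is correct and is essentially \cite[Proposition 2.10]{FL-braid}. The first genuine gap is the identification of $\SQSP(2n-1,n)$ with edge-labeled $2$-trees on $n+1$ vertices \emph{modulo graph isomorphism}. A graphic matroid determines its graph only up to Whitney $2$-isomorphism, so two edge-labelings of the same $2$-tree can yield the same matroid without being related by any graph automorphism. This already fails at $n=3$: the unique $2$-tree on $4$ vertices is $K_4$ minus an edge, with automorphism group of order $4$ acting faithfully on edges, so there are $5!/4=30$ edge-labelings up to isomorphism, whereas $|\SQSP(5,3)|=5\cdot 3!!=15$; the labelings collapse in pairs under the Whitney twist about the $2$-vertex cut. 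So the set you propose to count has the wrong cardinality. The model that does capture the matroid faithfully --- and the one used by Ferroni--Larson and throughout this paper --- is the triangular cactus on the ground set $[2n-1]$ whose triangles are the $3$-circuits of $M$ (a chordal matroid being determined by its $3$-circuits); the elements label \emph{vertices} of the cactus, not edges of a $2$-tree.

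The second gap is that the enumeration, which is the entire content of the theorem, is never carried out: you describe a peeling scheme, concede that neither factor of $(2n-1)^{n-2}(2n-3)!!$ emerges from it, and defer to an unspecified ``nontrivial counting argument'' or functional equation. Once the correct model is in place, no such machinery is needed: a triangular cactus on $2n-1$ vertices is a Husimi graph of type $(0,n-1,0,\ldots)$, and Husimi's formula (Lemma \ref{thm-num-Husimi}) gives
$\frac{(2n-1)!}{2^{n-1}(n-1)!}(2n-1)^{n-3}=(2n-1)!!\,(2n-1)^{n-3}=(2n-1)^{n-2}(2n-3)!!$,
which is Equation \eqref{eq-cactus} with $r=n$. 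Note also that the present paper does not reprove this statement; it cites \cite[Corollary 2.12]{FL-braid}, whose proof is exactly this combination of the triangular-cactus bijection with Husimi's enumeration. The Abel identities emphasized in the abstract enter only in the later results (Theorem \ref{even top} and its consequences), not here.
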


Our first main result is the following explicit formula for the leading coefficient of $P_{B_{2n-1}}(t)$, which we regard as a complement to Theorem \ref{odd top}.

\begin{theorem}\label{even top}
For any $n > 1$, the coefficient of $t^{n-2}$ in $P_{B_{2n-1}}(t)$ is equal to
$$|\SQSP(2n-2,n)| = (2n-1)^{n-2}\cdot (2n-3)!! -  \frac{(n-2)(n-1)^{n-5}\cdot (2n-1)!}{3\cdot (n-2)!}.$$
\end{theorem}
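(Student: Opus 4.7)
The plan is to apply Theorem~\ref{braid and sqsp} to reduce to computing $|\SQSP(2n-2,n)|$, and then enumerate this set by decomposing each simple quasi series-parallel matroid into a direct sum of coloops and cycle matroids of $2$-connected simple series-parallel graphs. A $2$-connected simple series-parallel graph on $v$ vertices has between $v$ and $2v-3$ edges, so a connected non-coloop summand of rank $k$ has nullity $\nu$ with $1\le\nu\le k-1$. Since a matroid in $\SQSP(2n-2,n)$ has total nullity $n-2$, this bound forces (for $n\ge 3$) the decomposition to take exactly one of three shapes: (a)~a single rank-$n$ non-coloop component of nullity $n-2$, contributing a count $S_n$; (b)~one coloop plus one maximum-nullity rank-$(n-1)$ component, contributing $(2n-2)\,R_{n-1}$, where $R_k=(2k-1)^{k-2}(2k-3)!!$ by Theorem~\ref{odd top}; or (c)~two maximum-nullity components of ranks $r$ and $n-r$ with $2\le r\le n-2$, contributing $\tfrac12\sum_{r=2}^{n-2}\binom{2n-2}{2r-1}R_r R_{n-r}$.

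Writing $R(x)=\sum_{k\ge 2}R_k\,x^{2k-1}/(2k-1)!$, the sum in (c) equals $(2n-2)!$ times the coefficient of $x^{2n-2}$ in $\tfrac12 R(x)^2$. The key observation --- the one flagged in the abstract --- is that $(2k-1)^{k-2}$ equals the value at $x=1$ of the Abel polynomial $A_{k-1}(x)=x(x+2(k-1))^{k-2}$, so after a substitution that absorbs the double-factorial factors, the convolution $R\cdot R$ can be evaluated in closed form by the binomial identity $\sum_k\binom{n}{k}A_k(x)A_{n-k}(y)=A_n(x+y)$ for Abel polynomials. This makes (c) explicit.

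The main obstacle is deriving a closed form for $S_n$. I would approach this by introducing the full bivariate exponential generating function of connected simple series-parallel matroids with $x$ marking elements and $y$ marking nullity, writing the EGF of all simple quasi series-parallel matroids as an exponential, and using Theorem~\ref{odd top} to identify the top-$y$-degree piece of this generating function. A second (subtop) extraction, parallel to the one producing $R_k$, would identify $S_n$ up to known convolutions, which a second application of Abel's identity should reduce to closed form. Alternatively, one can attempt a bijective description of rank-$n$, nullity-$(n-2)$ connected simple series-parallel matroids in terms of suitably marked labeled $2$-trees on $n+1$ vertices, though this is delicate because removing an arbitrary edge from a $2$-tree need not preserve $2$-connectivity.

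Summing the contributions (a), (b), and (c) and simplifying the resulting Abel-type expressions should produce the claimed identity. The presentation of the answer as $|\SQSP(2n-1,n)|$ minus the correction $\tfrac{(n-2)(n-1)^{n-5}(2n-1)!}{3(n-2)!}$ then suggests that this correction has an independent combinatorial meaning worth identifying.
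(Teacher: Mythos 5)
Your reduction of $|\SQSP(2n-2,n)|$ to the three shapes (a), (b), (c) is correct and is in fact exactly the content of Ferroni and Larson's decomposition quoted as Theorem~\ref{even top_en}: your (b)$+$(c) is the convolution term there (which the paper evaluates as $|\Des_1(n)|$ in Lemma~\ref{lem-abel-2}, using the same Abel identity you propose), and your $S_n$ is the quantity called $E_n$. The genuine gap is step (a). You have reduced the theorem to finding a closed form for $S_n=E_n$, the number of \emph{connected} simple series-parallel matroids of rank $n$ on $2n-2$ elements, but you give no actual derivation of it: the proposed ``second (subtop) extraction, parallel to the one producing $R_k$'' is not an argument. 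The top-nullity coefficient $R_k$ comes from a Husimi-type product formula for triangular cacti, but the subtop coefficient is not determined by it, and Abel's identity only evaluates the \emph{convolutions} (the disconnected contributions), not the connected piece itself. Indeed the paper records that Ferroni and Larson found $E_n$ to have large prime factors and gave no simple expression for it, and the closed form for $E_n$ (Corollary~\ref{cor-ssp}) is obtained in the paper only \emph{as a consequence} of Theorem~\ref{even top}, so your plan as written is circular at its crucial point.

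The paper's proof avoids ever computing $E_n$ directly. It constructs the deletion surjection $\Phi(n):\SQSP(2n-1,n)\to\SQSP(2n-2,n)$, $M\mapsto M\setminus(2n-1)$, stratifies the source by the number $m$ of 3-circuits through the deleted element, and shows (Lemma~\ref{bijection}) that the fibers are singletons for $m\geq 3$ and 3-to-1 for $m=2$, while the $m=1$ stratum is the root-forgetting map on rooted deserts. This expresses the difference $|\SQSP(2n-1,n)|-|\SQSP(2n-2,n)|$ as $2|\RDes_2(n)|+|\RDes_1(n)|-|\Des_1(n)|$, each term of which is a Husimi-graph count evaluated by Lemma~\ref{thm-num-Husimi} and the Abel identities; combining with Theorem~\ref{odd top} gives the result. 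If you want to salvage your route, you would need an independent evaluation of $E_n$, which is essentially equivalent to the theorem; otherwise the surjection-and-fibers argument is the missing idea.
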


\begin{remark}
	By looking at the formulas in Theorems \ref{odd top} and \ref{even top}, it is apparent that the key formula is for the difference $|\SQSP(2n-1,n)| - |\SQSP(2n-2,n)|$,
	or equivalently the leading coefficient of the polynomial $P_{B_{2n}}(t) - tP_{B_{2n-1}}(t)$.
	It is not {\em a priori} obvious that this difference should be meaningful or interesting.
\end{remark}

Let $E_n$ denote the number of simple series-parallel matroids of rank $n$ on the set $[2n-2]$.
Ferroni and Larson \cite{FL-braid} noted that the $E_n$ can have large prime factors, and did not give a simple expression for $E_n$. However, they obtained the following expression of  $|\SQSP(2n-2,n)|$ in terms of $E_n$.

\begin{theorem}{\em\cite[Proposition 2.13]{FL-braid}}\label{even top_en}
	For any $n > 1$, the coefficient of $t^{n-2}$ in $P_{B_{2n-1}}(t)$ is equal to
	{$$|\SQSP(2n-2,n)| = E_n+\frac{1}{2}\sum_{j=0}^{n-2}\bin{2n-2}{2j+1}\cdot (2j-1)!!(2j+1)^{j-1}\cdot (2n-2j-5)!!(2n-2j-3)^{n-j-3}.$$}
\end{theorem}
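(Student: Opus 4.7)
The plan is to decompose every matroid $M\in\SQSP(2n-2,n)$ according to its connected components and count the possibilities case by case. The crucial graph-theoretic input is the bound that a simple connected series-parallel matroid of rank $r$ has at most $2r-1$ elements. I would establish this by noting that such a matroid is either a single coloop (with $m=1=2r-1$) or the cycle matroid of a simple $2$-connected series-parallel graph on $r+1$ vertices; since series-parallel graphs have no $K_4$-minor, the classical edge bound for simple $K_4$-minor-free graphs gives at most $2(r+1)-3=2r-1$ edges.

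Armed with this bound, suppose $M\in\SQSP(2n-2,n)$ decomposes as $M_1\oplus\cdots\oplus M_k$, where $M_i$ has rank $r_i$ on $m_i$ elements. Since $M$ is simple, each $M_i$ is a simple connected series-parallel matroid, so
\[
2n-2 \;=\; \sum_{i=1}^k m_i \;\leq\; \sum_{i=1}^k (2r_i-1) \;=\; 2n-k,
\]
forcing $k\in\{1,2\}$. The case $k=1$ contributes $E_n$ by definition. In the case $k=2$, the inequality is an equality and hence $m_i=2r_i-1$ for both components; writing $r_1=j+1$ with $0\leq j\leq n-2$ gives $(m_1,r_1)=(2j+1,j+1)$ and $(m_2,r_2)=(2n-2j-3,n-j-1)$.

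Next I would observe that the same inequality applied inside $\SQSP(2r-1,r)$ forces $k\leq 1$, so every member of $\SQSP(2r-1,r)$ is connected; therefore $|\SQSP(2r-1,r)|$ counts exactly the simple series-parallel matroids of rank $r$ on $2r-1$ elements, to which Theorem \ref{odd top} applies. Counting ordered pairs $(M_1,M_2)$ by first choosing the ground set of $M_1$ in $\bin{2n-2}{2j+1}$ ways and then choosing the two matroids in $|\SQSP(2j+1,j+1)|\cdot|\SQSP(2n-2j-3,n-j-1)|$ ways, and then dividing by two to unorder the summands, gives
\[
|\SQSP(2n-2,n)| \;=\; E_n \;+\; \tfrac{1}{2}\sum_{j=0}^{n-2}\bin{2n-2}{2j+1}\cdot|\SQSP(2j+1,j+1)|\cdot|\SQSP(2n-2j-3,n-j-1)|.
\]
Substituting Theorem \ref{odd top} with the endpoint conventions $1^{-1}=1$ and $(-1)!!=1$ for the $j=0$ and $j=n-2$ terms (which correspond to a coloop component) yields the stated formula.

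The main obstacle is justifying the bound $m_i\leq 2r_i-1$, which is not recorded in the excerpt and requires invoking the classical extremal result for simple $K_4$-minor-free graphs. Once that is in hand, every remaining step is routine direct-sum bookkeeping, and the formula follows by plugging into Theorem \ref{odd top}.
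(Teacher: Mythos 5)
This statement is imported verbatim from Ferroni--Larson (\cite[Proposition 2.13]{FL-braid}); the paper gives no proof of it, so there is no internal argument to compare against line by line. The closest the paper comes is the identity $E_n=|\SQSP(2n-2,n)|-|\Des_1(n)|$ extracted in the proof of Corollary \ref{cor-ssp} together with Equation \eqref{equ-cx2}, which is exactly your decomposition read backwards. Your argument is correct and is essentially the intended one: split $M$ into connected components, show there are at most two, and count the two-component case as an unordered pair of connected simple series-parallel matroids on odd-sized ground sets, each enumerated by Theorem \ref{odd top}. The one external ingredient --- that a simple connected series-parallel matroid of rank $r$ has at most $2r-1$ elements --- you correctly reduce to Dirac's bound of $2v-3$ edges for simple $K_4$-minor-free graphs on $v$ vertices; note that the resulting structure statement (elements of $\SQSP(2n-2,n)$ are either connected or a direct sum $A\oplus B$ with $A\in\SQSP(2k-1,k)$, $B\in\SQSP(2(n-k)-1,n-k)$, and elements of $\SQSP(2r-1,r)$ are connected) is precisely \cite[Proposition 2.10]{FL-braid}, which this paper cites repeatedly (e.g.\ in Lemmas \ref{the map} and \ref{m=1}), so you could cite that instead of reproving it. Where the paper's pipeline passes through triangular cacti and packages the disconnected count as $|\Des_1(n)|$, you count the pairs directly via Theorem \ref{odd top}; the two are identical because $|\SQSP(2r-1,r)|=|\Delta(r)|$. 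Your care with the degenerate endpoints $j=0$ and $j=n-2$ (coloop components, conventions $1^{-1}=1$ and $(-1)!!=1$, needed since Theorem \ref{odd top} is only stated for $n>1$) is exactly the right bookkeeping.
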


Based on Theorems \ref{even top} and \ref{even top_en}, we obtain
the following explicit formula for $E_n$.

\begin{corollary}\label{cor-ssp}
	For any $n > 1$, we have
	\begin{align*}
		E_n=(2n-1)^{n-2}\cdot (2n-3)!! -  \frac{(n-2)(n-1)^{n-5}\cdot (2n-1)!}{3\cdot (n-2)!}-\frac{(n+1)(n-1)^{n-3}\cdot (2n-3)!}{3\cdot (n-1)!}.
	\end{align*}
\end{corollary}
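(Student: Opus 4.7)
\noindent The strategy is to combine Theorems~\ref{even top} and \ref{even top_en}: subtracting their two expressions for $|\SQSP(2n-2,n)|$ isolates $E_n$, and reduces the corollary to the single binomial identity
\begin{equation*}
\sum_{j=0}^{n-2}\binom{2n-2}{2j+1}(2j-1)!!(2j+1)^{j-1}(2n-2j-5)!!(2n-2j-3)^{n-j-3} = \frac{2(n+1)(n-1)^{n-3}(2n-3)!}{3(n-1)!}.
\end{equation*}
Writing $(2k-1)!! = (2k)!/(2^k k!)$ and cancelling factorials against $\binom{2n-2}{2j+1}$ allows one to extract the common factor $\frac{(2n-2)!}{2^{n-2}(n-2)!}$, reducing the identity (with $N := n-2$) to
\begin{equation*}
T_N := \sum_{j=0}^N \binom{N}{j}(2j+1)^{j-2}(2(N-j)+1)^{N-j-2} = \frac{(N+3)(N+1)^{N-3}\cdot 2^N}{3}.
\end{equation*}

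The next step is the partial-fraction decomposition
\begin{equation*}
\frac{1}{(2j+1)(2(N-j)+1)} = \frac{1}{2(N+1)}\left(\frac{1}{2j+1} + \frac{1}{2(N-j)+1}\right),
\end{equation*}
which together with the $j \leftrightarrow N-j$ symmetry yields $T_N = \frac{1}{N+1}\, T_N^{(2,1)}$, where
\begin{equation*}
T_N^{(2,1)} := \sum_{j=0}^N \binom{N}{j}(2j+1)^{j-2}(2(N-j)+1)^{N-j-1}.
\end{equation*}
It therefore suffices to establish $T_N^{(2,1)} = \frac{(N+3)\cdot 2^N(N+1)^{N-2}}{3}$.

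To do so, the plan is to invoke Abel's binomial identity
\begin{equation*}
\sum_{k=0}^M \binom{M}{k}(u+2k)^{k-1}(v+2(M-k))^{M-k-1} = \frac{(u+v)(u+v+2M)^{M-1}}{uv}.
\end{equation*}
Setting $v=1$, $M=N$, differentiating with respect to $u$, and evaluating at $u=1$ produces the relation $S_N - T_N^{(2,1)} = -2^N(N+1)^{N-2}$, where $S_N := \sum_k k\binom{N}{k}(2k+1)^{k-2}(2(N-k)+1)^{N-k-1}$. Using $k\binom{N}{k} = N\binom{N-1}{k-1}$ and reindexing $k \mapsto k+1$, the sum $S_N$ itself becomes an instance of Abel's identity with parameters $u=3$, $v=1$, $M=N-1$, yielding $S_N = \frac{N\cdot 2^N(N+1)^{N-2}}{3}$. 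Solving the linear equation then gives $T_N^{(2,1)}$, and division by $N+1$ delivers the desired formula for $T_N$, which after back-substitution proves the corollary.

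The main obstacle is pinpointing the correct reduction path. The summand in the target identity looks like an Abel summand $(2j+1)^{j-1}(2(N-j)+1)^{N-j-1}$ but with each exponent lowered by one, so it is not directly an Abel sum. The partial-fraction step is crucial: by symmetry it reduces both lowered exponents to a sum with only one exponent lowered, and the remaining gap is bridged by a single differentiation of Abel together with an index shift that converts a weighted Abel sum back into an (unweighted) Abel sum with shifted base parameters. Abel's identity is thus applied twice, which is consistent with the abstract's assertion that the binomial identity for Abel polynomials is the principal tool.
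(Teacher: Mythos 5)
Your proposal is correct; I checked the reduction to the identity $T_N=\tfrac{(N+3)(N+1)^{N-3}2^N}{3}$ (with $N=n-2$), the partial-fraction step, the value $S_N-T_N^{(2,1)}=-2^N(N+1)^{N-2}$ obtained by differentiating the quotient form of Abel's identity at $u=v=1$, and the evaluation $S_N=\tfrac{N\cdot 2^N(N+1)^{N-2}}{3}$ via the absorption identity and the shifted Abel sum at $(u,v)=(3,1)$; all of these are right and they do assemble into the stated formula. The route differs from the paper's in an interesting way. The paper's proof of the corollary is a two-line deduction: it observes that the sum appearing in Theorem~\ref{even top_en} is, term by term, exactly the expression \eqref{equ-cx2} for $|\Des_1(n)|$, so that $E_n=|\SQSP(2n-2,n)|-|\Des_1(n)|$, and then it simply substitutes Theorem~\ref{even top} and Lemma~\ref{lem-abel-2}. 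The analytic content you reprove is therefore the content of Lemma~\ref{lem-abel-2}, namely the identity \eqref{eq-aim-identity}, which is literally $3T_N=2^{n-2}(n+1)(n-1)^{n-5}$. There the paper works entirely at the point $x=y=1$, $a=-2$: it takes Abel's identity \eqref{eq-abel}, its $x$-derivative \eqref{eq-abel-x}, applies the reflection $r\mapsto n-2-r$, and finds a linear combination whose summand telescopes to a multiple of $(2r+1)^{r-2}(2n-2r-3)^{n-r-4}$. Your partial-fraction decomposition of $\tfrac{1}{(2j+1)(2(N-j)+1)}$ replaces that linear-combination step and is arguably cleaner, since it reduces the doubly-lowered exponents to a singly-lowered sum in one move; your evaluation of the weighted sum $S_N$ by shifting to an Abel sum with base parameter $u=3$ is also a genuinely different device from the paper's, which never leaves the parameter point $(1,1)$. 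What you lose relative to the paper is the combinatorial meaning: the identification of the subtracted term with $|\Des_1(n)|$ (deserts with two components) is what makes the paper's corollary proof conceptually transparent, whereas your argument treats it as a pure binomial identity.
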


%We would like to point out that Ferroni and Larson \cite{FL-braid} obtained a stronger result than Theorem \ref{braid and sqsp}, which
%provides a combinatorial interpretation for the coefficients of the equivariant Kazhdan--Lusztig polynomial of $B_{n}$ equipped with the action of $\mathfrak{S}_{n-1}$.

It is known that the {\bf inverse Kazhdan--Lusztig polynomial} $Q_M(t)\in\N[t]$ of matroid $M$ is a polynomial with non-negative integer coefficients of degree strictly less than half the rank of $M$ \cite{GX,BHMPW2}. 
While we have no concrete combinatorial interpretation of these coefficients along the lines of Theorem \ref{braid and sqsp},we obtain the following explicit formulas for the leading coefficients.
% Motivated by Theorem \ref{braid and sqsp}, another natural question is to ask for a combinatorial interpretation of $Q_{B_n}(t)$. We failed to find such an interpretation. But fortunately, we obtain  explicit formulas for the leading coefficients of inverse Kazhdan--Lusztig polynomials of braid matroids.

\begin{theorem}\label{Q_leading}
For any $n > 1$,
the coefficient of $t^{n-1}$ in $Q_{\mathrm{B}_{2n}}(t)$ is equal to
  \begin{align*}
  %  \frac{(2n-1)!(2n-1)^{n-3}}{2^{n-1}(n-1)!}
  (2n-1)^{n-2}\cdot (2n-3)!!,
  \end{align*}
and the coefficient of $t^{n-2}$ in $Q_{\mathrm{B}_{2n-1}}(t)$ is equal to
  \begin{align*}
  \frac{ (n-1)^{n-5}\cdot (2n-1)!}{3\cdot (n-2)!}.
  \end{align*}
\end{theorem}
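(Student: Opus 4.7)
The plan is to deduce Theorem \ref{Q_leading} from the standard inversion identity characterizing $Q_M$ in terms of $P_M$ as a signed sum over the lattice of flats of $M$. Specialized to $M=B_n$, whose lattice of flats is the partition lattice $\Pi_n$, each flat corresponds to a set partition $\pi=\{B_1,\ldots,B_k\}$ of $[n]$, with restriction $(B_n)|_\pi\cong\bigoplus_i B_{|B_i|}$ and contraction $(B_n)/\pi\cong B_k$. The inversion identity thus expresses $Q_{B_n}(t)$ as a signed sum, over the non-finest partitions $\pi$, of products $\prod_i P_{B_{|B_i|}}(t)\cdot Q_{B_{k(\pi)}}(t)$, setting up an induction on $n$.

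The next step is a degree analysis exploiting $\deg P_{B_j},\deg Q_{B_j}\le\lfloor(j-2)/2\rfloor$. A short parity count shows: for $n=2m$, only the coarsest partition $\{[n]\}$ and the finest partition $\{\{1\},\ldots,\{n\}\}$ contribute to the coefficient of $t^{m-1}$; for $n=2m-1$, the contributing partitions are the finest, the coarsest, and those consisting of exactly one block of even size together with the remaining elements as singletons.

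For $Q_{B_{2m}}$ the inversion identity then collapses, after a short sign check, to $[t^{m-1}]Q_{B_{2m}}=[t^{m-1}]P_{B_{2m}}$, which combined with Theorem \ref{odd top} yields the first half of Theorem \ref{Q_leading}. For $Q_{B_{2m-1}}$, substituting the leading-coefficient formulas from Theorems \ref{odd top}, \ref{even top} and the just-established first half of the present theorem (applied to $Q_{B_{2m-2s}}$) into the recursion yields
\begin{align*}
[t^{m-2}]Q_{B_{2m-1}} &= -[t^{m-2}]P_{B_{2m-1}}\\
&\qquad + \sum_{s=1}^{m-1}\binom{2m-1}{2s}(2s-1)^{s-2}(2s-3)!!\,(2m-2s-1)^{m-s-2}(2m-2s-3)!!.
\end{align*}
The decisive step, and the principal technical obstacle, is to evaluate this sum in closed form. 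Passing to exponential generating functions, the convolution kernels of the type $(n-1)^{n-2}(2n-3)!!$ that appear are precisely those governed by Abel's binomial identity
$$\sum_{k=0}^{n}\binom{n}{k}\,x(x+k)^{k-1}\,y(y+n-k)^{n-k-1}=(x+y)(x+y+n)^{n-1},$$
so an appropriate specialization (or derivative) of Abel's identity collapses the sum to the claimed value $\frac{(m-1)^{m-5}(2m-1)!}{3(m-2)!}$. The remaining care is bookkeeping: signs from the inversion identity, the binomial factor counting placements of the even block in $[2m-1]$, and the boundary cases $s=1$ and $s=m-1$ where one of the factors is an empty product.
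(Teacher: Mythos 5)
Your proposal follows essentially the same route as the paper: the Gao--Xie inversion identity for $Q_M$ over the lattice of flats (the partition lattice for $B_n$), a parity/degree analysis showing that only the finest flat and the flats of type $(2j,1^{2n-1-2j})$ contribute to the top coefficient, and an evaluation of the resulting convolution via Abel's binomial identity. The one place where the paper does noticeably more work than your sketch suggests is the closing identity, which is obtained not from a single specialization of Abel's identity but from a linear combination of the identity with its first and second partial derivatives, all specialized at $a=-2$, $x=-1$, $y=1$.
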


\begin{remark}
	Comparing Theorem \ref{Q_leading} with Theorems \ref{odd top} and \ref{even top}, we see that
the leading coefficient of $Q_{\mathrm{B}_{2n}}(t)$ coincides with that of $P_{B_{2n}}(t)$, and
the difference $|\SQSP(2n-1,n)| - |\SQSP(2n-2,n)|$ is the leading coefficient of $(n-2)Q_{\mathrm{B}_{2n-1}}(t)$.
% \textcolor{blue}{What is the combinatorial meaning of $\frac{|\SQSP(2n-1,n)| - |\SQSP(2n-2,n)|}{n-2}$?}
\end{remark}

By Theorem \ref{even top}, Corollary \ref{cor-ssp}, and Theorem \ref{Q_leading}, we see that understanding the difference $|\SQSP(2n-1,n)| - |\SQSP(2n-2,n)|$ is very important for us to determine the leading coefficients of $P_{B_{n}}(t)$ and $Q_{B_{n}}(t)$. We prove Theorem \ref{even top} by constructing a surjective map $$\Phi(n):\SQSP(2n-1,n)\to\SQSP(2n-2,n)$$ and studying the fibers.
Many of the fibers turn out to be singletons, and therefore contribute nothing to the difference $|\SQSP(2n-1,n)| - |\SQSP(2n-2,n)|$.
The remaining fibers can be understood via the enumeration of Husimi graphs.  The construction of the map and analysis of the fibers
take place in Section \ref{fibers}, while the discussion of Husimi graphs takes place in Section \ref{gen}.
The proof of Corollary \ref{cor-ssp} is also presented in Section \ref{gen}, which will involve the binomial identity for the Abel polynomials. Section \ref{sec-inverse} will be devoted to the proof of Theorem \ref{Q_leading}.\\

\vspace{\baselineskip}
\noindent
{\bf Acknowledgments:}
We would like to thank Linyuan Lu and Matthew Xie for very helpful discussions about the leading coefficients of
the Kazhdan-Lusztig polynomials and inverse Kazhdan-Lusztig polynomials of braid matroids of even rank. 

\section{A surjective map from \boldmath{$\SQSP(2n-1,n)$} to \boldmath{$\SQSP(2n-2,n)$}}\label{fibers}

The aim of this section is to express the difference between $|\SQSP(2n-1,n)|$ and $|\SQSP(2n-2,n)|$ in terms of the count of certain combinatorial objects. To this end, we will introduce some definitions and lemmas.

\begin{lemma}\label{the map}
If $n > 1$, $M\in\SQSP(2n-1,n)$, and $M' = M \setminus (2n-1)$, then $M'\in\SQSP(2n-2,n)$.
\end{lemma}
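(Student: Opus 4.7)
The plan is to verify the three conditions defining membership in $\SQSP(2n-2,n)$: the ground set has the correct cardinality, the matroid is simple and quasi series-parallel, and the rank equals $n$. Two of these are immediate. By construction $|E(M')|=2n-2$. Both simplicity and the quasi series-parallel property are preserved under single-element deletion, since any loop or parallel pair of $M\setminus(2n-1)$ is already a loop or parallel pair of $M$, and any minor of $M\setminus(2n-1)$ is a minor of $M$, so neither $B_4$ nor $U_{2,4}$ can appear. The only substantive content is to show $\rk(M')=n$, or equivalently that the element $2n-1$ is not a coloop of $M$.

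To establish this I would invoke the bound that any simple quasi series-parallel matroid $N$ of positive rank $r$ satisfies $|E(N)|\leq 2r-1$. The hypothesis $M\in\SQSP(2n-1,n)$ is precisely that $M$ saturates this bound. If $2n-1$ were a coloop of $M$, then the first paragraph would force $M'$ to be a simple quasi series-parallel matroid of rank $n-1$ on $2n-2$ elements, contradicting the bound since $2n-2>2(n-1)-1=2n-3$. Hence $2n-1$ is not a coloop and $\rk(M')=\rk(M)=n$.

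The only real work lies in the bound $|E(N)|\leq 2r-1$, which I view as the main potential obstacle if it is not available as a direct citation. A short self-contained proof goes by decomposing $N$ into its matroid-connected components: since $N$ is simple these all have positive rank $r_1,\dots,r_k$ with $\sum r_i=r$, and each component arises either as $U_{1,1}$ or as the cycle matroid of a simple 2-connected series-parallel graph on at least three vertices. Applying the classical edge bound $|E(G)|\leq 2|V(G)|-3$ for simple series-parallel graphs (equivalently, $K_4$-minor-free graphs) componentwise gives $|E_i|\leq 2r_i-1$ in both cases, and summing produces $|E(N)|\leq\sum(2r_i-1)=2r-k\leq 2r-1$, which is exactly what we need.
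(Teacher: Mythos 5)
Your proof is correct, and it reaches the one nontrivial point (that $2n-1$ is not a coloop of $M$) by a genuinely different route from the paper. The paper also begins by invoking minor-closedness of the class of simple quasi series-parallel matroids, but then disposes of the coloop issue in one line by citing Ferroni--Larson's structural result that any matroid in $\SQSP(2n-1,n)$ is connected, and a connected matroid on more than one element has no coloops. You instead use the extremal bound $|E(N)|\leq 2\,\rk(N)-1$ for simple quasi series-parallel matroids of positive rank: if $2n-1$ were a coloop, the deletion would be a simple quasi series-parallel matroid of rank $n-1$ on $2n-2>2(n-1)-1$ elements, a contradiction. Your componentwise derivation of that bound from the classical edge count $|E(G)|\leq 2|V(G)|-3$ for simple series-parallel graphs is sound (the rank-one components are coloops contributing one element each, and the summation $\sum(2r_i-1)=2r-k\leq 2r-1$ goes through). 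What the paper's route buys is brevity via a citation; what yours buys is self-containedness, and it makes visible the same saturation phenomenon ($|E|=2r-1$ being extremal) that underlies why $\SQSP(2n-1,n)$ governs the leading coefficient in the first place. Either argument is acceptable here.
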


\begin{proof}
The class of simple quasi series-parallel matroids is minor closed by \cite[Proposition 2.6(iii)]{FL-braid},
so we just need to show that the rank cannot decrease.  In other words, we need to show that $2n-1$ is not a coloop of $M$.
This follows from the fact that $M$ is connected, which is a consequence of \cite[Proposition 2.10]{FL-braid}.
\end{proof}

For any $n\geq 2$, Lemma \ref{the map} enables us to define a map
$$\Phi(n):\SQSP(2n-1,n)\to \SQSP(2n-2,n)$$ by sending
$M\in\SQSP(2n-1,n)$ to $M \setminus (2n-1)$.  Our next goal is to show that $\Phi(n)$ is surjective.
To achieve this, it is convenient to use a connection between simple quasi series-parallel matroids and triangular cacti. Recall that a {\bf triangular cactus} is a connected graph with the property that every edge belongs to a unique cycle and all cycles have length 3.
Let $E$ be a finite set of cardinality $2n-1$, and let $\Delta(E)$ denote the set of triangular cacti on $E$.
In \cite[Proposition 2.11]{FL-braid}, the authors constructed a bijection
$\Psi(E)$ from $\SQSP(E,n)$ to $\Delta(E)$.  We will write $\Delta(n) := \Delta([2n-1])$ and $\Psi(n) := \Psi([2n-1])$,
and denote the bijection by
$$\Psi(n):\SQSP(2n-1,n)\to \Delta(n).$$
% We have the following result, which is not obvious from the original description of $\Phi(n)$.

\begin{lemma}\label{surjective}
For any $n\geq 2$, the deletion map $\Phi(n)$ is surjective.
\end{lemma}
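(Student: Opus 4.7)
The plan is to show that every $M' \in \SQSP(2n-2,n)$ arises as $M \setminus (2n-1)$ for some $M \in \SQSP(2n-1,n)$, by explicitly constructing the extension $M$. The main tools will be a structural dichotomy on $M'$ together with the triangular cactus description of $\SQSP(2n-1,n)$ from $\Psi(n)$. Writing $M' = M_1 \oplus \cdots \oplus M_r$ as a direct sum of indecomposable simple SP summands with ranks $k_i$ and element counts $e_i$, the cactus bound $e_i \leq 2k_i - 1$ (implicit in Theorem \ref{odd top}) together with $\sum_i k_i = n$ and $\sum_i e_i = 2n-2$ forces $\sum_i (2k_i - e_i) = 2$ with each term at least $1$. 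Hence either (A) $r = 2$ with both summands at the cactus bound, i.e., $M_i \in \SQSP(E_i, k_i)$ with $|E_i| = 2k_i - 1$; or (B) $r = 1$ and $M'$ is itself indecomposable of ``deficiency $2$''.

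In case (A), I would apply $\Psi(k_i)$ to obtain triangular cacti $T_i$ on $E_i$, pick any vertex $v_i \in E_i$, and form a new triangular cactus $T$ on $E_1 \sqcup E_2 \sqcup \{2n-1\}$ by attaching a new triangle on the triple $\{v_1, v_2, 2n-1\}$. The resulting $T$ has $n-1$ triangles on $2n-1$ vertices, so $M := \Psi(n)^{-1}(T) \in \SQSP(2n-1, n)$. Writing $M$ as an iterated parallel connection along $v_1$ and $v_2$, the deletion of $2n-1$ collapses the new $U_{2,3}$ block to $U_{2,2}$, which in turn collapses the parallel connection to the direct sum $M_1 \oplus M_2 = M'$. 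In case (B), I would instead choose a pair $\{a, b\} \subseteq E(M')$ with $\operatorname{cl}_{M'}(\{a, b\}) = \{a,b\}$ (no third element lies in this 2-flat); such a pair exists because the number of $3$-circuits in an indecomposable $M' \in \SQSP(2n-2, n)$ is bounded by its corank $n-2$, so they cover at most $3(n-2) < \binom{2n-2}{2}$ pairs of elements. I would then let $M$ be the principal single-element extension of $M'$ placing $2n-1$ into the $2$-flat $\{a,b\}$, so that $\{a, b, 2n-1\}$ is a $3$-circuit of $M$; by construction $M \setminus (2n-1) = M'$ and $M$ has the correct rank and element count.

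The main obstacle is the verification $M \in \SQSP(2n-1, n)$ in case (B): I must rule out both $U_{2,4}$ and $B_4$ as minors of $M$. The $U_{2,4}$ exclusion is immediate from the defining condition $\operatorname{cl}_{M'}(\{a, b\}) = \{a, b\}$, which keeps the new rank-$2$ flat of size exactly $3$. The $B_4$ exclusion is subtler: any $B_4$-minor of $M$ involving $2n-1$ would, after local contraction and deletion around the new triangle, restrict to a structure already present in $M'$ and forbidden by the QSP hypothesis. I expect this reduction to follow from the minor-closedness of $\SQSP$ (already used in Lemma \ref{the map}) together with a short case analysis of how the triangle $\{a, b, 2n-1\}$ can interact with existing $3$-circuits of $M'$. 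Together, cases (A) and (B) exhibit a preimage of every $M' \in \SQSP(2n-2,n)$, proving surjectivity of $\Phi(n)$.
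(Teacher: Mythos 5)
Your reduction to two cases is sound: the bound of $2k-1$ elements for a connected simple series-parallel matroid of rank $k$ does force either a direct sum of two ``saturated'' summands or a single connected summand, and your case (A) construction (joining the two triangular cacti through a new triangle containing $2n-1$) is exactly the paper's treatment of the disconnected case. The problem is case (B). You propose to extend a connected $M'$ by placing $2n-1$ on an arbitrary closed rank-$2$ flat $\{a,b\}$, and you defer the exclusion of a $B_4$ minor to ``a short case analysis.'' No such analysis can work, because the claim is false: the condition $\operatorname{cl}_{M'}(\{a,b\})=\{a,b\}$ does not suffice. Take $n=4$ and $M'=M(K_{2,3})$ with parts $\{1,2\}$ and $\{3,4,5\}$; this is a connected simple series-parallel matroid of rank $4$ on $6$ elements, so $M'\in\SQSP(6,4)$. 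The pair $a=13$, $b=14$ is a closed $2$-flat (there is no edge $34$, hence no $3$-circuit through $a$ and $b$). But any simple binary extension placing a third point on the line through $v_{13}$ and $v_{14}$ must add the vector $v_{13}+v_{14}$, i.e.\ the edge $34$, and $K_{2,3}+34$ contains $K_4$ as a minor (contract the edge $15$: the merged vertex becomes adjacent to $2,3,4$, which are already pairwise adjacent). So the resulting matroid has a $B_4$ minor and is not quasi series-parallel; a non-binary placement instead creates a $U_{2,4}$ minor. Your counting argument only produces \emph{some} closed pair, not a pair for which the extension stays in the class, so case (B) has a genuine gap.

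The paper avoids this by never performing a free single-element extension on a connected $M'$. Instead it splits the connected case further: if $M'$ is a series extension of a simple series-parallel matroid (which is what happens for $M(K_{2,3})$, via the $2$-cocircuit $\{13,23\}$), it un-subdivides, passes to the triangular cactus of the contraction, and reattaches a triangle through $2n-1$ with a relabeling; otherwise $M'$ is a ``triangle extension'' of a smaller simple series-parallel matroid and the statement follows by induction on $n$. Both subcases produce the preimage $M$ directly as (the matroid of) a triangular cactus, so membership in $\SQSP(2n-1,n)$ is automatic and no minor-exclusion argument is needed. If you want to salvage your approach, you would have to characterize exactly which $2$-flats $\{a,b\}$ admit a series-parallel completion --- in graph terms, which paths $u\text{--}v\text{--}w$ can be closed into a triangle without creating a $K_4$ minor --- and prove one always exists; that is essentially as hard as the structural case analysis the paper does.
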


\begin{proof}
We will proceed by induction on $n$.  The cases where $n=2$ or $n=3$ are trivial, thus we may assume that $n>3$.  
%The lemma is trivial for $n=2$. For $n\geq 3$, we use induction on $n$.
Let $M'\in \SQSP(2n-2,n)$ be given.  By \cite[Proposition 2.10]{FL-braid}, either $M' \cong A\oplus B$ for some $A\in\SQSP(2k-1,k)$
and $B\in\SQSP(2(n-k)-1,n-k)$, or $M'$ is connected.

Suppose first that $M'$ is not connected.  That is, we have a subset $S\subset [2n-2]$ of cardinality $2k-1$, a simple quasi series-parallel matroid $A$ of rank $k$ on $S$,
and a simple quasi series-parallel matroid $B$ of rank $n-k$ on $[2n-2]\setminus S$, with the property that $M' = A\oplus B$.
Choose elements $e\in S$ and $f\in [2n-2]\setminus S$, and consider the triangular cactus $G$ on $[2n-1]$ obtained by taking the union of the cactus $\Delta(S)(A)$, the cactus $\Delta([2n-2]\setminus S)(B)$, and the triangle $\{e,f,2n-1\}$.
If we take $M\in\SQSP(2n-1,n)$ to be the unique element with $\Psi(n)(M) = G$, then
$\Phi(n)(M) = M\setminus(2n-1) = M'$.

%\textcolor{red}{Now suppose that $M'$ is connected.
%Since $M'$ is simple, it must be a series extension of a simple series-parallel matroid.  That is, there exists a cocircuit $\{e,f\}\subset[2n-2]$ of $M'$ such that $M'' := M'/e$ is a simple series-parallel matroid.  Let $E := [2n-2]\setminus\{e\}$.
%Note that $M''$ is a simple series-parallel matroid of rank $n-1$ on $E$. Consider the triangular cactus $G'' := \Psi(E)(M'')$.  Let $G$ be the triangular cactus on $[2n-1]$ obtained by taking the union of $G''$ and the triangle $\{e,f,2n-1\}$.
%If we take $M\in\SQSP(2n-1,n)$ to be the unique matroid with $\Psi(n)(M) = G$, then $\Phi(n)(M) = M\setminus(2n-1) = M'$.}

Now suppose that $M'$ is connected.  We will break the argument into two cases, depending on whether or not
$M'$ is a series extension of a simple series-parallel matroid.  First assume that it is, i.e. that
there exists a cocircuit $\{e,f\}\subset[2n-2]$ of $M'$ such that $M'' := M'/e$ is a simple series-parallel matroid.  Let $E := [2n-2]\setminus\{e\}$.
Note that $M''$ is a simple series-parallel matroid of rank $n-1$ on $E$. Consider the triangular cactus $G'' := \Psi(E)(M'')$.  Let $G$ be the triangular cactus on $[2n-1]$ obtained by taking the union of $G''$ and the triangle $\{e,f,2n-1\}$ and then interchanging $f$ and $2n-1$. If we take $M\in\SQSP(2n-1,n)$ to be the unique matroid with $\Psi(n)(M) = G$, then we have $\Phi(n)(M) = M\setminus(2n-1) = M'$.

Finally, suppose that $M'$ is connected but is not a series extension of a simple series-parallel matroid.
In this case, there must exist a circuit $\{e,f,g\}\subset[2n-2]$ of $M'$ such that $M'':= M'\setminus \{e,f\}$ is a
simple series-parallel matroid of rank $n-1$ on $[2n-2]\setminus\{e,f\}$ and $M'$ can be obtained from $M''$ by first 
adding a new element $e$ that is parallel to $g$ (a parallel extension) and then replacing $e$ with a cocircuit $\{e,f\}$ (a series extension).
We will refer to $M'$ as a 
{\bf triangle extension} of $M''$ at the element $g$.
By our inductive hypothesis, there exists a simple series-parallel matroid $M'''$ of rank $n-1$ on the set $[2n-1]\setminus\{e,f\}$
such that $M''$ is obtained by deleting the element $2n-1$ from $\tilde{M}$.  Let $M$ be the matroid on $[2n-1]$ obtained from $M'''$ as a triangle extension at $g$.
Then $\Phi(n)(M) = M\setminus(2n-1) = M'$.
%By relabelling the elements of $M'$, we may assume that $M''\in \SQSP(2n-4,n-1)$. By induction, there exists $\tilde{M}\in \SQSP(2n-3,n-1)$
%such that $M''\cong \Phi(n-1)(\tilde{M})$. Let $M$ be the matroid obtained from $\tilde{M}$ by applying a triangle extension to $g$. Then
%$\Phi(n)(M) = M\setminus(2n-1) = M'$.
\end{proof}

We proceed to study the fibers of the surjection $\Phi(n)$. This will be done by a careful analysis of the circuits of elements in $\SQSP(2n-1,n)$. Let $M$ be a matroid on the ground set $E$, and suppose that $C\subset E$ is a circuit.  
We say that an element $e\in E\setminus C$ is a {\bf chord} for $C$ if there exists a subset $S\subset C$ such that $S\cup\{e\}$ and $(C\setminus S)\cup\{e\}$ are both circuits.
If $C$ does not have any chords, we will say that it is {\bf chordless}.  We note that every 3-circuit in a simple matroid is chordless, and a simple matroid is determined by its chordless circuits.
A simple matroid that has no chordless $k$-circuits for any $k\geq 4$ is called {\bf chordal}.
Note that every element of $\SQSP(2n-1,n)$ is chordal.

Let $m$ be a natural number.  Let $\SQSP_m(2n-1,n)\subset \SQSP(2n-1,n)$ be the set of matroids in which the element $2n-1$ is contained
in exactly $m$ 3-circuits, and let $$\SQSP_m(2n-2,n) := \Phi(n)\big(\SQSP_m(2n-1,n)\big).$$
If $M\in\SQSP_m(2n-1,n)$, then $\Phi(n)(M)$ has exactly $\binom{m}{2}$ chordless 4-circuits,
coming from pairs of 3-circuits in $M$ that contain $2n-1$.  This fact, along with Lemma \ref{surjective}, implies that
$\SQSP(2n-2,n)$ is equal to the disjoint union of the sets $\SQSP_m(2n-2,n)$.  We will write
$$\Phi_m(n):\SQSP_m(2n-1,n)\to \SQSP(2n-2,n)$$
to denote the restriction of $\Phi(n)$ to $\SQSP_m(2n-1,n)$.
% The connectedness of an element of $\SQSP(2n-2,n)$ is implied by the following result.

\begin{lemma}\label{disconnected}
If $M'\in\SQSP_m(2n-2,n)$, then $M'$ is connected if and only if $m>1$.
\end{lemma}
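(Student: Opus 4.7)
The plan is to transport the question to the triangular cactus side of the Ferroni--Larson bijection. By Lemma~\ref{surjective} I may pick some $M\in\SQSP_m(2n-1,n)$ with $\Phi_m(n)(M)=M'$, and set $G:=\Psi(n)(M)$, a triangular cactus on $[2n-1]$. By the definition of $\SQSP_m$, the vertex $v:=2n-1$ of $G$ lies in exactly $m$ of the $n-1$ triangles of $G$; denote them $T_i=\{v,a_i,b_i\}$ for $i=1,\ldots,m$. The two implications will be handled separately.

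For the ``only if'' direction ($m=1\Rightarrow M'$ disconnected), the unique triangle at $v$ is $T_1=\{v,a,b\}$ and $v$ is not a cut vertex of $G$. Removing the block $T_1$ from the block-cut tree of $G$ splits the remaining $n-2$ triangles into an $a$-subcactus and a $b$-subcactus, which yields a partition $[2n-2]=A\sqcup B$ with $a\in A$ and $b\in B$ (allowing $A=\{a\}$ or $B=\{b\}$ if $a$ or $b$ is not a cut vertex). I would verify from the construction of $\Psi$ that $M|_A$ and $M|_B$ are the matroids associated with these two subcacti. Using the fact that a triangular cactus matroid with $k$ triangles has rank $k+1$, a quick count gives $r_M(A)+r_M(B)=n=r(M)$. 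Since ranks in $M'$ agree with ranks in $M$ on subsets of $[2n-2]$, this is a $1$-separation of $M'$, so $M'=M'|_A\oplus M'|_B$ is disconnected.

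For the ``if'' direction ($m\geq 2\Rightarrow M'$ connected), I would use the $4$-circuits flagged in the paragraph before the lemma: for any $i\neq j$, the symmetric difference $T_i\triangle T_j=\{a_i,b_i,a_j,b_j\}$ is a $4$-element circuit of $M$ not containing $v$, hence a circuit of $M'$. For fixed $i$, all such $4$-circuits (as $j$ varies) contain $\{a_i,b_i\}$, so the entire set $\{a_k,b_k:1\leq k\leq m\}$ is circuit-connected in $M'$. For any other element $x\in[2n-2]$, $x$ is a vertex of some triangle $S$ of $G$ different from all the $T_i$, and $S$ is a $3$-circuit of $M'$. Connectedness of $G$ provides a sequence of triangles from $S$ to some $T_i$ in which consecutive triangles share a cut vertex of $G$; each such shared cut vertex is an element of $M'$ common to two consecutive $3$-circuits of $M'$, so $x$ is circuit-connected to the component of $\{a_k,b_k\}$. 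Hence $M'$ is connected.

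The main obstacle will be formally establishing two structural facts about the bijection $\Psi(n)$ that drive the argument: (i) that the restriction of $M$ to the vertex set of a subcactus of $G$ is the matroid assigned by $\Psi$ to that subcactus, used in the $m=1$ case; and (ii) that the symmetric difference of two triangles of $G$ meeting at a cut vertex is always a circuit of $M$, used in the $m\geq 2$ case. Both statements should be extracted from the explicit construction of $\Psi(n)$ in \cite[Proposition 2.11]{FL-braid}, and it will be cleanest to isolate them as small preliminary observations before assembling the main proof.
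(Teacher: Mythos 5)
Your argument is correct and follows the same basic line as the paper's: the connectivity of $M'=M\setminus(2n-1)$ is governed by the $3$-circuits of $M$ through the element $2n-1$, with the unique triangle producing a separation when $m=1$ and the induced chordless $4$-circuits gluing everything together when $m>1$. The paper's own proof is a three-sentence version that simply asserts the two key facts you actually prove (that no circuit of $M'$ joins the two sides of the unique triangle when $m=1$, and that deletion does not disconnect when $m>1$), so your block-tree/rank-count and circuit-connectivity chains are supplying details the paper leaves to the reader; note that in the $m=1$ case you can sidestep your flagged fact (i) by using submodularity to get $r_M(A)\le k_A+1$ and $r_M(B)\le k_B+1$ directly, which forces equality against $r_M(A)+r_M(B)\ge r(M')=n$.
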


\begin{proof}
Choose $M\in\SQSP_m(2n-1,n)$ such that $M'=\Phi_m(n)(M)$.
We know from \cite[Proposition 2.10]{FL-braid} that $M$ is connected, and we also know that all of the chordless circuits of $M$ are 3-circuits.
If $m>1$, then $2n-1$ is contained in multiple 3-circuits of $M$, so deleting it does not disconnect the matroid.
On the other hand, if $m=1$, then $2n-1$ is contained in a unique 3-circuit $\{e,f,2n-1\}$ of $M$.  This implies that there are no circuits
of $M'$ containing both $e$ and $f$, so $M'$ is disconnected.
\end{proof}

The following lemme characterizes the fibers of $\Phi(n)$ over
the connected elements of $\SQSP(2n-2,n)$.

\begin{lemma}\label{bijection}
Let $n\geq 2$.
The map $\Phi_m(n)$ is a bijection when $m\geq 3$, and it is 3-to-1 when $m=2$.
\end{lemma}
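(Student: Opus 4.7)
The plan is to recover the $3$-circuits of $M$ containing $2n-1$ from the chordless $4$-circuits of $M' = \Phi_m(n)(M)$. Write these $3$-circuits as $\{a_i, b_i, 2n-1\}$ for $i = 1, \ldots, m$; by the simplicity of $M$ the $m$ pairs $\{a_i, b_i\}$ are pairwise disjoint, and by the discussion preceding Lemma \ref{disconnected} the $\binom{m}{2}$ chordless $4$-circuits of $M'$ are precisely the sets $\{a_i, b_i, a_j, b_j\}$ with $i < j$. Reconstructing $M$ from $M'$ is therefore equivalent to recovering the pairing $\{\{a_i, b_i\} : 1 \leq i \leq m\}$.

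For $m \geq 3$, I would show the pairing is uniquely determined by $M'$. An element $x$ appearing in some chordless $4$-circuit lies in exactly $m-1 \geq 2$ of them, and in each such $4$-circuit $x$ appears alongside its partner $y$; since the $m$ pairs are pairwise disjoint, no element other than $x$ or $y$ lies in more than one of these $4$-circuits. Hence $y$ is the unique element (besides $x$) common to all chordless $4$-circuits of $M'$ containing $x$, so the pairing is uniquely recovered. Injectivity of $\Phi_m(n)$ follows, and surjectivity onto $\SQSP_m(2n-2, n)$ holds by the definition of the codomain.

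For $m = 2$, the unique chordless $4$-circuit $C = \{a, b, c, d\}$ of $M'$ admits exactly three partitions into two disjoint pairs, so $|\Phi_2(n)^{-1}(M')| \leq 3$. For the reverse inequality, I would invoke the cactus bijection $\Psi(n)$. Choose any $M_1 \in \Phi_2(n)^{-1}(M')$ (which exists by Lemma \ref{surjective}) and let $G_1 = \Psi(n)(M_1)$; then $2n-1$ is a vertex of $G_1$ lying in exactly the two triangles $\{2n-1, a, b\}$ and $\{2n-1, c, d\}$. Define $G_2$ by replacing these with $\{2n-1, a, c\}$ and $\{2n-1, b, d\}$, and $G_3$ by replacing them with $\{2n-1, a, d\}$ and $\{2n-1, b, c\}$, keeping all other triangles unchanged. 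If $G_2$ and $G_3$ are valid triangular cacti on $[2n-1]$, they correspond under $\Psi(n)^{-1}$ to matroids $M_2, M_3 \in \SQSP_2(2n-1, n)$ agreeing with $M_1$ on the triangles avoiding $2n-1$; thus $M_i \setminus (2n-1) = M'$ for $i = 1, 2, 3$, and the three matroids are pairwise distinct because their $3$-circuits through $2n-1$ differ.

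The hard part will be verifying that $G_2$ (and by symmetry $G_3$) is in fact a triangular cactus. The only potential failure is that some ``cross'' edge such as $\{a, c\}$ already appears in another triangle of $G_1$. Suppose toward contradiction that $\{a, c, f\}$ is a triangle of $G_1$. The possibilities $f \in \{b, d\}$ would make $\{a, b, c\}$ or $\{a, c, d\}$ a $3$-circuit, impossible since these are proper subsets of the $4$-circuit $\{a, b, c, d\}$; and $f = 2n-1$ is excluded since $2n-1$ already lies in the two specified triangles. Hence $f \in [2n-2] \setminus \{a, b, c, d\}$, but then the three edges $\{2n-1, a\}$, $\{a, c\}$, $\{c, 2n-1\}$, contributed by the triangles $\{2n-1, a, b\}$, $\{a, c, f\}$, $\{2n-1, c, d\}$ respectively, form a $3$-cycle $\{2n-1, a, c\}$ in $G_1$ distinct from any listed triangle; this places the edge $\{2n-1, a\}$ in two cycles, contradicting the cactus property. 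The same argument applies to each of $\{a, d\}$, $\{b, c\}$, $\{b, d\}$, so $G_2$ and $G_3$ are genuine cacti and $|\Phi_2(n)^{-1}(M')| = 3$.
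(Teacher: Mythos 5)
Your proof is correct in substance but follows a genuinely different route from the paper's. The paper observes that the restriction of $M'$ to the union $S$ of its chordless $4$-circuits is the matroid of the complete bipartite graph $K_{2,m}$, and then invokes the classification of chordal single-element extensions of that matroid: a unique one (represented by the thagomizer graph $T_m$) when $m\geq 3$, and exactly three (the three chordal extensions of the uniform matroid $U_{3,4}$, one for each way of pairing the four elements) when $m=2$. You instead recover the pairing directly: for $m\geq 3$ by the elementary observation that the partner of $x$ is the unique other element lying in every chordless $4$-circuit through $x$, and for $m=2$ by explicit surgery on the triangular cactus to exhibit all three preimages. Your $m\geq 3$ argument is more self-contained (no appeal to the thagomizer classification), and your $m=2$ argument is more explicit than the paper's about why all three candidate extensions genuinely lie in $\SQSP_2(2n-1,n)$; the paper's version is shorter because it works entirely on the matroid side.

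Two small points. First, the disjointness of the pairs $\{a_i,b_i\}$ does not follow from simplicity alone ($U_{2,4}$ is simple and has $3$-circuits sharing two elements); it follows from the absence of a $U_{2,4}$-minor in quasi series-parallel matroids, or equivalently from the cactus structure, where the edge $\{2n-1,a_i\}$ lies in a unique triangle. Second, in the $m=2$ case your reduction ``the only potential failure is that a cross edge already appears in another triangle'' is not the whole story: $G_2$ would also fail to be a triangular cactus if $a$ and $c$ were joined by a longer path avoiding $2n-1$, since the new edge $\{a,c\}$ would then close a cycle of length greater than $3$. But the cycle argument you give rules this out verbatim (such a path together with the edges $\{a,2n-1\}$ and $\{2n-1,c\}$ would place the edge $\{2n-1,a\}$ in a second cycle of $G_1$), so this is a gap of phrasing rather than of substance.
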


\begin{proof}
Let $M'\in \SQSP_m(2n-2,n)$ be given.  We want to count the  matroids $M\in\SQSP_m(2n-1,n)$ with $\Phi_m(n)(M) = M'$.
Since $M$ must be chordal, it is determined by its 3-circuits.  The 3-circuits of $M$ that do not contain the element $2n-1$
coincide with the 3-circuits of $M'$, hence it is sufficient to think about the 3-circuits of $M$ that contain $2n-1$.

Let $S\subset[2n-2]$ be the union of the $\binom m 2$ chordless 4-circuits of $M'$.  Then the restriction of $M'$ to $S$ is isomorphic to the matroid associated with the complete bipartite graph $K_{2,m}$.  If $m\geq 3$, there is a unique chordal
extension of this matroid, represented by the thagomizer graph $T_m$ \cite{thag}.
This uniquely determines all of the 3-circuits of $M$ that contain $2n-1$.  If $m=2$,
then the matroid associated with $K_{2,2}$ is the uniform matroid of rank three on four elements, and there are three different extensions
of this matroid to a chordal matroid on five elements, corresponding to the three ways to partition the four elements into pairs of subsets of size two.  These determine three different matroids $M\in\SQSP_m(2n-1,n)$
that map to $M'$.
\end{proof}

The above lemma shows that only those fibers of $\Phi(n)$ over elements of $\SQSP_1(2n-2,n)\cup \SQSP_2(2n-2,n)$ can contribute to the difference $|\SQSP(2n-1,n)|-|\SQSP(2n-2,n)|$. As shown below, these contributions can be expressed in terms of the counts of some combinatorial objects constructed from triangular cacti.
Define a {\bf desert} to be a disjoint union of triangular cacti, and a {\bf rooted desert} to be a disjoint union of rooted triangular cacti.
Let $\Des_m(n)$ denote the set of deserts on the vertex set $[2n-2]$ with exactly $2m$ connected components,
and let $\RDes_m(n)$ denote the set of rooted deserts on the vertex set $[2n-2]$ with exactly $2m$ connected components.
We have a map $$\Omega_m(n):\RDes_m(n)\to\Des_m(n)$$ given by forgetting the roots.

Let $\Delta_m(n)\subset\Delta(n)$ denote the set of triangular cacti with the property that the vertex $2n-1$ has degree $2m$, or equivalently
that it is contained in exactly $m$ triangles.  Then the bijection $\Psi(n)$ restricts to a bijection
$$\Psi_m(n):\SQSP_m(2n-1,n)\to\Delta_m(n)$$
for all $m$. We also have a map
$$\Pi_m(n):\Delta_m(n)\to\RDes_m(n)$$
given by deleting the vertex $2n-1$ along with all of the triangles that passed through that vertex, and taking the roots to be the vertices
from the deleted triangles. Note that $\Pi_m(n)$ is surjective for all $m$, and $\Pi_1(n)$ is a bijection.

\begin{lemma}\label{burning bridges}
% The map
% $\Pi_1(n)$ is a bijection and the map
% $\Pi_2(n)$ is 3-to-1.
There is a bijection $\Sigma_2(n):\SQSP_2(2n-2,n)\to\RDes_2(n)$
with the property that the following diagram commutes:
\[
\begin{tikzcd}
    \SQSP_2(2n-1,n) \ar[rr, "\Psi_2(n)", "\cong"'] \ar[dd, two heads,"\Phi_2(n)"'] && \Delta_2(n) \ar[dd, two heads, "\Pi_2(n)"] \\ \\
    \SQSP_2(2n-2,n)\ar[rr, "\Sigma_2(n)", "\cong"'] && \RDes_2(n).
\end{tikzcd}
\]
\end{lemma}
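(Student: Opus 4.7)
The plan is to define $\Sigma_2(n)$ by forcing the diagram to commute, namely
\[
\Sigma_2(n)(M') := \Pi_2(n)\bigl(\Psi_2(n)(M)\bigr)
\]
for an arbitrary choice of $M \in \Phi_2(n)^{-1}(M')$. The fiber is nonempty by Lemma \ref{surjective} and has exactly three elements by Lemma \ref{bijection}; commutativity is then automatic, so the work is to show (a) that the formula does not depend on the choice of $M$, and (b) that $\Sigma_2(n)$ is a bijection.

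For (a), I would invoke the description from the proof of Lemma \ref{bijection}: the three matroids in $\Phi_2(n)^{-1}(M')$ correspond to the three ways to partition the unique chordless $4$-circuit $\{a,b,c,d\} \subset [2n-2]$ of $M'$ into two pairs, each pair together with $2n-1$ forming a $3$-circuit of $M$. Consequently the three corresponding triangular cacti share every triangle except for the two through $2n-1$, and their common triangles are precisely the $3$-circuits of $M'$. Since $\Pi_2(n)$ deletes the vertex $2n-1$ together with both triangles through it, the underlying graph of the image is determined by the $3$-circuits of $M'$, and its four roots are always $\{a,b,c,d\}$; hence the three images coincide.

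For (b), I would first verify that $\Pi_2(n)$ itself is surjective and $3$-to-$1$: given a rooted desert $D$ with roots $r_1,r_2,r_3,r_4$, any preimage under $\Pi_2(n)$ is obtained by adjoining a new vertex $2n-1$ together with two triangles using up the four roots two at a time, which amounts to choosing one of the three partitions of $\{r_1,r_2,r_3,r_4\}$ into two pairs. Combined with the bijection $\Psi_2(n)$ and Lemma \ref{bijection} (stating that $\Phi_2(n)$ is $3$-to-$1$), this yields $|\SQSP_2(2n-2,n)| = |\RDes_2(n)|$; commutativity of the diagram and surjectivity of $\Pi_2(n) \circ \Psi_2(n)$ force $\Sigma_2(n)$ to be surjective, and a surjection between finite sets of equal cardinality is a bijection. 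I anticipate that the only delicate step is (a), the well-definedness --- specifically, being precise about how the three lifts agree on all their non-$(2n-1)$ triangles --- while the remaining steps reduce to straightforward counting and diagram chasing.
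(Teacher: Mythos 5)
Your proposal is correct and ends up constructing exactly the same map as the paper: your well-definedness argument shows that all three lifts yield the rooted desert whose triangles are the $3$-circuits of $M'$ and whose roots are the unique chordless $4$-circuit of $M'$, which is precisely the paper's (one-line) definition of $\Sigma_2(n)$. The only difference is that you establish bijectivity by counting (both $\Phi_2(n)$ and $\Pi_2(n)$ are surjective and $3$-to-$1$), a detail the paper leaves implicit, and your argument for this is sound.
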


% \begin{proof}
% Given a rooted desert $D\in\RDes_m(n)$, the choice of a preimage $G\in\Delta_m(n)$ is uniquely determined by partitioning
% the $2m$ roots of $D$ into $m$ sets of size 2, connecting each pair with a new edge, and connecting each root to $2n-1$.
% When $m=1$, there is a unique way to do this.  When $m=2$, there are 3 ways to do this.
% \end{proof}
\begin{proof}
Given a matroid $M'\in \SQSP_2(2n-2,n)$, we define $\Sigma_2(n)(M')\in \RDes_2(n)$ to have triangles consisting of the
3-circuits of $M'$ and roots consisting of the unique chordless 4-circuit of $M'$.
% Given a rooted desert $D\in\RDes_2(n)$, we define $\Sigma_2(n)(D)$
% to be the simple matroid whose 3-circuits are given by the triangles of $D$ and whose unique chordless 4-circuit consists of the four roots of $D$.
\end{proof}

\begin{lemma}\label{m=1}
There is a bijection $\Theta_1(n):\Des_1(n)\to \SQSP_1(2n-2,n)$
with the property that the following diagram commutes:
%with the property that $$\Phi_1(n) = \Theta_1(n)\circ\Omega_1(n)\circ\Pi_1(n)\circ\Psi_1(n).$$
\[
\begin{tikzcd}
    \SQSP_1(2n-1,n) \ar[rr, "\Psi_1(n)", "\cong"'] \ar[dd, two heads,"\Phi_1(n)"'] && \Delta_1(n) \ar[rr, "\Pi_1(n)", "\cong"'] && \RDes_1(n)\ar[dd, two heads, "\Omega_1(n)"] \\ \\
    \SQSP_1(2n-2,n)\ar[rrrr, "\Theta_1(n)", "\cong"'] &&&& \Des_1(n).
\end{tikzcd}
\]
\end{lemma}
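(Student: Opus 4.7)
The plan is to define $\Theta_1(n)$ directly in matroid-theoretic terms, in the same spirit as the definition of $\Sigma_2(n)$ in Lemma \ref{burning bridges}, and then verify commutativity and bijectivity. Given $M' \in \SQSP_1(2n-2, n)$, Lemma \ref{disconnected} tells us that $M'$ is disconnected. I would first observe that $M'$ has exactly two connected components: picking any lift $M \in \Phi_1(n)^{-1}(M')$, the element $2n-1$ lies in the unique triangle $\{e,f,2n-1\}$ of $M$, and since $M$ itself is connected by \cite[Proposition 2.10]{FL-braid}, removing only this triangle together with the vertex $2n-1$ can split $M$ into at most two pieces, namely the components of $M'$ containing $e$ and $f$. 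Thus $M' = A \oplus B$ for a unique pair of connected simple quasi series-parallel matroids $A, B$ on complementary subsets $S_1 \sqcup S_2 = [2n-2]$ of odd sizes.

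I would then define $\Theta_1(n)(M') := \Psi(S_1)(A) \sqcup \Psi(S_2)(B)$, the disjoint union of the two triangular cacti obtained by applying the bijections from \cite[Proposition 2.11]{FL-braid} to each component. This lies in $\Des_1(n)$. The inverse map sends a desert $D_1 \sqcup D_2$ on $S_1 \sqcup S_2 = [2n-2]$ to $\Psi(S_1)^{-1}(D_1) \oplus \Psi(S_2)^{-1}(D_2)$; this lies in $\SQSP_1(2n-2, n)$ because it is a direct sum of two connected simple series-parallel matroids (hence has no chordless $4$-circuits), and gluing in an auxiliary triangle through $2n-1$ produces a lift in $\SQSP_1(2n-1,n)$.

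For commutativity of the diagram, start from $M \in \SQSP_1(2n-1, n)$. The cactus $\Psi_1(n)(M)$ contains the unique triangle $T = \{e,f,2n-1\}$ through $2n-1$; applying $\Pi_1(n)$ deletes $T$ and the vertex $2n-1$, yielding two rooted triangular cacti $G_1, G_2$ on sets $S_1 \ni e$ and $S_2 \ni f$, and $\Omega_1(n)$ forgets the roots to leave the plain desert $G_1 \sqcup G_2$. On the other path, $\Phi_1(n)(M) = M \setminus (2n-1) = A \oplus B$ where $A, B$ are the two matroid components, and $\Theta_1(n)$ sends this to $\Psi(S_1)(A) \sqcup \Psi(S_2)(B)$. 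The only point requiring verification is the identification $\Psi(S_1)(A) = G_1$ and $\Psi(S_2)(B) = G_2$, which follows directly from the construction of $\Psi$ in \cite[Proposition 2.11]{FL-braid}: the triangles of $\Psi_1(n)(M)$ are exactly the $3$-circuits of $M$, and the $3$-circuits of $M$ not containing $2n-1$ partition into those of $A$ and those of $B$ according to which matroid component of $M'$ they lie in. No step here presents a genuine obstacle; the lemma is essentially a repackaging of the direct-sum compatibility of Ferroni--Larson's bijection $\Psi$.
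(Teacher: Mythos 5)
Your proposal is correct and follows essentially the same route as the paper: both use Lemma \ref{disconnected} to decompose $M'=A\oplus B$ into two connected summands on complementary odd-size subsets and define $\Theta_1(n)(M')$ as the disjoint union of the Ferroni--Larson cacti $\Psi(S_1)(A)$ and $\Psi(S_2)(B)$. The paper's proof is in fact terser than yours (it only states the definition of the map), so your added verifications of the two-component claim, the inverse map, and the commutativity of the diagram are consistent elaborations rather than a different argument.
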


\begin{proof}
Let $M'\in \SQSP_1(2n-2,n)$ be given.
By Lemma \ref{disconnected}, $M'$ is disconnected, so
there exists a subset $S\subset[2n-2]$ of cardinality $2k-1$, a matroid $A\in \SQSP(S,k)$, and another matroid $B\in \SQSP([2n-2]\setminus B, n-k)$ such that $M' = A\oplus B$.  We then define
$\Theta_1(n)(M')$ to be the union of $\Psi(S)(A)$ and $\Psi([2n-2]\setminus S)(B)$.
\end{proof}

We now come to the main result of this section.

\begin{proposition}\label{difference}
For any $n>1$, we have
$$|\SQSP(2n-1,n)| - |\SQSP(2n-2,n)| = 2 \cdot |\RDes_2(n)| + |\RDes_1(n)| - |\Des_1(n)|.$$
\end{proposition}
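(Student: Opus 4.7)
The plan is to partition $\SQSP(2n-2,n)$ according to the stratification $\SQSP_m(2n-2,n)$ introduced before Lemma \ref{disconnected}, compute the contribution of each stratum to the difference $|\SQSP(2n-1,n)| - |\SQSP(2n-2,n)|$ using the fiber analysis already carried out, and then translate everything into the combinatorial language of (rooted) deserts via the bijections of Lemmas \ref{burning bridges} and \ref{m=1}. Concretely, since $\Phi(n)$ is surjective by Lemma \ref{surjective} and $\SQSP(2n-2,n) = \bigsqcup_{m \geq 1} \SQSP_m(2n-2,n)$, I would start from the identity
\begin{equation*}
|\SQSP(2n-1,n)| - |\SQSP(2n-2,n)| = \sum_{m \geq 1} \bigl(|\SQSP_m(2n-1,n)| - |\SQSP_m(2n-2,n)|\bigr).
\end{equation*}

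By Lemma \ref{bijection}, $\Phi_m(n)$ is a bijection whenever $m \geq 3$, so all such terms vanish. For $m = 2$, the same lemma says $\Phi_2(n)$ is $3$-to-$1$, so $|\SQSP_2(2n-1,n)| = 3|\SQSP_2(2n-2,n)|$, and the $m=2$ term contributes $2\cdot |\SQSP_2(2n-2,n)|$. Invoking the bijection $\Sigma_2(n)$ of Lemma \ref{burning bridges}, this equals $2 \cdot |\RDes_2(n)|$, giving the first summand in the desired formula.

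It remains to handle $m=1$. Here I would use the outer columns of the commutative diagram of Lemma \ref{m=1}. The composition $\Pi_1(n) \circ \Psi_1(n)$ is a bijection from $\SQSP_1(2n-1,n)$ onto $\RDes_1(n)$, so $|\SQSP_1(2n-1,n)| = |\RDes_1(n)|$. On the other hand, the bijection $\Theta_1(n)$ gives $|\SQSP_1(2n-2,n)| = |\Des_1(n)|$. Hence the $m=1$ term contributes exactly $|\RDes_1(n)| - |\Des_1(n)|$, and adding this to the $m=2$ contribution yields the claimed identity.

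This is essentially a bookkeeping argument once the lemmas of this section are in hand; there is no serious obstacle. The only small point to double-check is that the decomposition $\SQSP(2n-2,n) = \bigsqcup_{m \geq 1} \SQSP_m(2n-2,n)$ really does start at $m=1$ (equivalently, that no $M \in \SQSP(2n-1,n)$ has the vertex $2n-1$ contained in zero triangles of its associated triangular cactus), which is immediate because the cactus $\Psi(n)(M)$ is connected and every vertex of a triangular cactus lies in at least one triangle.
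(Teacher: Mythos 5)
Your proof is correct and follows essentially the same route as the paper: decompose the difference over the strata $\SQSP_m$, kill the $m\geq 3$ terms with the bijectivity of $\Phi_m(n)$, use the 3-to-1 property and $\Sigma_2(n)$ for $m=2$, and the identification of $\Phi_1(n)$ with $\Omega_1(n)$ for $m=1$. The extra remark verifying that the stratification begins at $m=1$ is a reasonable point to make explicit, though the paper leaves it implicit.
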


\begin{proof}
We have
$$|\SQSP(2n-1,n)| - |\SQSP(2n-2,n)| = \sum_{m\geq 1} \Big(|\SQSP_m(2n-1,n)| - |\SQSP_m(2n-2,n)|\Big).$$
When $m\geq 3$, Lemma \ref{bijection} tells us that $\Phi_m(n):\SQSP_m(2n-1,n)\to \SQSP_m(2n-2,n)$ is a bijection,
thus the summand indexed by $m$ vanishes.  When $m=2$,
Lemma \ref{bijection} tells us that the map $\Phi_2(n):\SQSP_2(2n-1,n)\to \SQSP_2(2n-2,n)$
is 3-to-1, and Lemma \ref{burning bridges} identifies $\SQSP_2(2n-2,n)$ with $\RDes_2(n)$.
This implies that
$$|\SQSP_2(2n-1,n)| - |\SQSP_2(2n-2,n)| = 2|\SQSP_2(2n-2,n)| = 2 |\RDes_2(n)|.$$
Finally, when $m=1$,
Lemma \ref{m=1} identifies the map $\Phi_1(n):\SQSP_1(2n-1,n)\to\SQSP_1(2n-2,n)$ with the map $\Omega_1(n):\RDes_1(n)\to \Des_1(n)$.
The result follows.
\end{proof}

Thus, to give an explicit formula for computing $|\SQSP(2n-1,n)| - |\SQSP(2n-2,n)|$, it remains to determine $|\RDes_2(n)|, |\RDes_1(n)|$ and  $|\Des_1(n)|$. This task will be completed in the next section, via the enumeration of Husimi graphs.

\section{Proofs of Theorem \ref{even top} and Corollary \ref{cor-ssp}}\label{gen}
% In this section, we will give the proof of Theorem \ref{even top}, as well as that of Corollary \ref{cor-ssp}.
% As promised before, let us first determine $|\RDes_2(n)|, |\RDes_1(n)|$ and  $|\Des_1(n)|$ by using Husimi trees,
% which were introduced by Husimi in \cite{H-50}. 
A {\bf block} of a graph is a maximal 2-connected subgraph.  A {\bf Husimi graph} is a connected graph whose blocks
are all isomorphic to complete graphs. 
We say that it is of type $(n_2,n_3,n_4,\ldots)$, where $n_i$ is the number of blocks isomorphic to $K_i$.
For any $p\geq 1$, let $\tau_p(n_2,n_3,n_4,\ldots)$ denote the number of Husimi graphs of  type $(n_2,n_3,n_4,\ldots)$
on the vertex set $[p]$.
The following result was initially discovered by Husimi \cite{H-50} and later given a rigorous mathematical proof by Leroux \cite{Leroux}.
See \cite[Lemma 5.3.3]{Okoth} for a clear statement and discussion of the history.

\begin{lemma}\label{thm-num-Husimi}
    For any $p\geq 1$, we have
     \begin{align*}
        \tau_p(n_2,n_3,n_4,\ldots)=\frac{p!}{\prod_{i=2}^p[(i-1)!]^{n_i}\, n_i!}\,p^{-2+\sum_{i=2}^pn_i}.
    \end{align*}
\end{lemma}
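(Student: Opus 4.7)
The plan is to follow Leroux's generating-function approach: enumerate \emph{rooted} Husimi graphs first via a functional equation that encodes the block decomposition at the root, apply Lagrange inversion to extract coefficients, and finally divide by $p$ to obtain the unrooted count.

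First I would introduce the multivariate exponential generating function
$$H(x;t_2,t_3,\ldots)\;=\;\sum_{p\geq 1}\;\sum_{(n_2,n_3,\ldots)}\,p\cdot\tau_p(n_2,n_3,\ldots)\,\prod_{i\geq 2}t_i^{n_i}\,\frac{x^p}{p!},$$
where $t_i$ marks blocks isomorphic to $K_i$ and the factor of $p$ accounts for the choice of a root vertex, so that $p!\,[x^p]H$ counts rooted Husimi graphs on $[p]$ weighted by type.

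Next I would derive a functional equation. Looking locally at the root vertex $v$, one sees an unordered multiset of blocks through $v$; each such $K_i$-block contributes $i-1$ other vertices, and each of these is the root of a smaller rooted Husimi graph, with the $i-1$ interchangeable within the block. The standard species-level translation of this decomposition (multiset $\leadsto \exp$, symmetry among the $i-1$ non-root vertices in a $K_i$ block $\leadsto$ division by $(i-1)!$) yields
$$H \;=\; x\cdot\exp\!\left(\sum_{i\geq 2}\frac{t_i\,H^{\,i-1}}{(i-1)!}\right).$$

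Then I would apply Lagrange inversion. Setting $\Phi(H)=\exp\!\bigl(\sum_i t_i H^{i-1}/(i-1)!\bigr)$, so $H=x\,\Phi(H)$, Lagrange inversion gives
$$[x^p]\,H \;=\; \frac{1}{p}\,[H^{p-1}]\,\Phi(H)^p \;=\; \frac{1}{p}\,[H^{p-1}]\,\exp\!\left(p\sum_{i\geq 2}\frac{t_i\,H^{\,i-1}}{(i-1)!}\right).$$
Expanding the exponential, the coefficient of $\prod_i t_i^{n_i}$ equals $\prod_i \dfrac{p^{n_i}}{n_i!\,[(i-1)!]^{n_i}}\cdot H^{\sum_i(i-1)n_i}$, so the constraint $H^{p-1}$ picks out exactly the terms satisfying $\sum_{i\geq 2}(i-1)n_i=p-1$, which is precisely the vertex-count identity for a Husimi graph on $p$ vertices. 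Multiplying by $p!$ to convert from EGF coefficient to a count of rooted Husimi graphs, and then dividing by the extra factor of $p$ from the root, produces
$$\tau_p(n_2,n_3,\ldots) \;=\; \frac{p!}{p^2}\cdot p^{\sum_i n_i}\cdot\prod_{i\geq 2}\frac{1}{n_i!\,[(i-1)!]^{n_i}},$$
which is the stated formula.

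The main obstacle will be rigorously justifying the functional equation at the level of labelled combinatorial species, and in particular checking that the symmetry factor $1/(i-1)!$ together with the $\exp$ enumerating the multiset of blocks at the root produces exactly one contribution per rooted Husimi graph with the correct type weight. Once that step is secured, the Lagrange inversion is a routine coefficient extraction and the conversion between rooted and unrooted counts is immediate.
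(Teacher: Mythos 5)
Your proof is correct. Note that the paper does not actually prove this lemma: it is quoted as a known result of Husimi, with the rigorous proof attributed to Leroux and a clean statement in Okoth's thesis. Your argument is precisely the standard proof from those references --- the species-style functional equation $H = x\exp\bigl(\sum_{i\ge 2} t_i H^{i-1}/(i-1)!\bigr)$ for rooted Husimi graphs followed by Lagrange inversion. The step you flag as the main obstacle is indeed the crux, but it is unproblematic here: every vertex of a Husimi graph other than the root lies in a unique block closest to the root (the block--cut tree is genuinely a tree), so the decomposition at the root produces each rooted labelled structure exactly once, and since the vertices are labelled there are no automorphism corrections to worry about either in the functional equation or in the final division by $p$ that converts rooted counts to unrooted ones. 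The coefficient extraction, including the constraint $\sum_i (i-1)n_i = p-1$ matching the vertex--block count of a Husimi graph, is exactly as you describe.
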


Note that a Husimi graph of type $(p-1,0,0,\ldots)$ is just a tree on the vertex set $[p]$, and Lemma \ref{thm-num-Husimi}
specializes to the statement, due originally to Cayley, that the number of such trees is $p^{\,p-2}$.
Similarly, a Husimi graph of type $(0,r-1,0,0,\ldots)$ is
a triangular cactus on the set $[2r-1]$.  In this case, 
Lemma \ref{thm-num-Husimi} says that
\begin{align}\label{eq-cactus}
|\Delta(r)|= \frac{(2r-1)^{r-3}\cdot (2r-1)!}{2^{r-1}\cdot (r-1)!}.
\end{align}

% Now we are able to determine $|\RDes_1(n)|$, $|\RDes_2(n)|$, and even general $|\RDes_m(n)|$.

\begin{proposition}\label{prop_rooted-deserts}
For any $n>1$ and $m\geq 1$, we have
%$$|\RDes_m(n)|=\frac{(2n-2)!}{(2m-1)!2^{n-m-1}(n-m-1)!}(2n-2)^{n-m-2}$$
$$|\RDes_m(n)|=\frac{(n-1)^{n-m-2}\cdot (2n-2)!}{2\cdot (2m-1)!\cdot (n-m-1)!}.$$
In particular,
$$|\RDes_1(n)|=\frac{(n-1)^{n-3}\cdot (2n-2)!}{2\cdot (n-2)!} \qquad \mbox{and } \qquad |\RDes_2(n)|=\frac{(n-1)^{n-4}\cdot (2n-2)!}{12\cdot (n-3)!}.$$
\end{proposition}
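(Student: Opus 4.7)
The plan is to establish a bijection between $\RDes_m(n)$ and a specific family of Husimi graphs on $[2n-2]$, and then compute their cardinality via Lemma~\ref{thm-num-Husimi}. Concretely, I claim that a rooted desert with $2m$ components corresponds naturally to a Husimi graph on $[2n-2]$ whose blocks consist of one copy of $K_{2m}$ together with $n-m-1$ triangles, and no others.

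In one direction, given a rooted desert with roots $v_1,\ldots,v_{2m}$, I adjoin the edges of a complete graph on $\{v_1,\ldots,v_{2m}\}$. The resulting graph is connected; the original triangles remain blocks, and the newly added $K_{2m}$ is $2$-connected and meets the triangles only at single vertices, so it is also a block. If the $i$th component of the desert contains $k_i$ triangles, it uses $2k_i+1$ vertices, so the identity $\sum_{i=1}^{2m}(2k_i+1)=2n-2$ forces $\sum k_i=n-m-1$, matching the prescribed block count. Conversely, given a Husimi graph of the claimed type, I delete the edges of its unique $K_{2m}$ block, retain its $2m$ vertices as the roots, and take the resulting connected components as the rooted triangular cacti.

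The key point to verify is that this deletion produces exactly $2m$ connected components, each containing one former $K_{2m}$-vertex. I will establish this via the block-cut tree of the Husimi graph: if two $K_{2m}$-vertices remained connected after the deletion, some path of triangular blocks would link them, and together with the $K_{2m}$ block this path would form a cycle in the block-cut tree, contradicting its tree structure. Hence the $2m$ components are genuinely distinct, each a triangular cactus rooted at one former $K_{2m}$-vertex, and the two constructions are mutually inverse. This block-cut tree step is the main conceptual obstacle; once in place, the rest is mechanical.

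Applying Lemma~\ref{thm-num-Husimi} with $p=2n-2$, $n_3=n-m-1$, $n_{2m}=1$, and all other $n_i=0$ (consistent for $m=1$, where $n_{2m}=n_2=1$ contributes only a factor of $(2-1)!=1$) yields
\[
|\RDes_m(n)|=\frac{(2n-2)!}{(2!)^{n-m-1}\,(n-m-1)!\,(2m-1)!}\,(2n-2)^{n-m-2}.
\]
Substituting $(2n-2)^{n-m-2}=2^{n-m-2}(n-1)^{n-m-2}$ and simplifying the powers of $2$ produces the asserted formula. The displayed specializations for $m=1$ and $m=2$ then follow by direct substitution.
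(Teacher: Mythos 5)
Your proof is correct and follows essentially the same route as the paper: both identify $\RDes_m(n)$ with the set of Husimi graphs on $[2n-2]$ having $n-m-1$ triangular blocks and one $K_{2m}$ block, and then apply Lemma~\ref{thm-num-Husimi}. You simply spell out the verification of the bijection (the block-cut tree argument and the vertex count $\sum(2k_i+1)=2n-2$) that the paper leaves implicit, and the final computation matches.
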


\begin{proof}
Let $\operatorname{HT}_m(n)$ denote the set of Husimi graphs on the vertex set $[2n-2]$ with $n-m-1$ triangular blocks and one block isomorphic to $K_{2m}$.
There is a bijection from $\operatorname{HT}_m(n)$ to $\RDes_m(n)$ that takes a Husimi graph to the rooted desert obtained
by deleting the edges of $K_{2m}$ and taking its vertices as the roots. The result then follows from Lemma \ref{thm-num-Husimi}.
% , it follows that
% $$|\RDes_m(n)|=|\operatorname{HT}_m(n)|=\frac{(n-1)^{n-m-2}\cdot (2n-2)!}{2\cdot (2m-1)!\cdot (n-m-1)!},$$
% as desired.
\end{proof}

We proceed to determine $|\Des_1(n)|$. Before that, let us recall a result on the {\bf Abel polynomials}
%, which are defined by
$$A_m(x;a):=x(x-am)^{m-1}.$$
Abel \cite[Section 2.6]{Roman-1984} showed that these polynomials
satisfy the identity
\begin{align*}%\label{abel-binomial-id}
\sum_{j=0}^m\binom{m}{j}A_j(x;a)A_{m-j}(y;a)=A_m(x+y;a)
\end{align*}
for any integers $m,x,y$ and $a$.
By combining this formula with the definition of the Abel polynomials, we obtain the equation
\begin{align}\label{eq-abel}
\sum_{j=0}^m\binom{m}{j}xy(x-aj)^{j-1}(y-am+aj)^{m-j-1}=(x+y)(x+y-am)^{m-1}.
\end{align}
%Differentiating both sides of \eqref{eq-abel} with respect to $x$ and with respect to $y$, we get two equations
% \begin{align*}
% &\sum_{j=0}^m\binom{m}{j}y(x-aj)^{j-1}\big(y-a(m-j)\big)^{m-j-1}\\
% &~~~~~~~~~~~~~
% +\sum_{j=0}^m\binom{m}{j}(j-1)xy(x-aj)^{j-2}\big(y-a(m-j)\big)^{m-j-1}\\
% &~~~~~~~~~~~~~~~~~~~~~~~~~~~~~~~=(x+y-am)^{m-1}+(m-1)(x+y)(x+y-am)^{m-2},
% \end{align*}
% which simplifies to
% Meanwhile, differentiating both sides of \eqref{eq-abel} with respect to $y$, we get
% \begin{align*}
% &\sum_{j=0}^m\binom{m}{j}x(x-aj)^{j-1}\big(y-a(m-j)\big)^{m-j-1}\\
% &~~~~~~~~~~~~~~
% +\sum_{j=0}^m\binom{m}{j}(m-j-1)xy(x-aj)^{j-1}\big(y-a(m-j)\big)^{m-j-2}\\
% &~~~~~~~~~~~~~~~~~~~~~~~~~~~~~~~~~
% =(x+y-am)^{m-1}+(m-1)(x+y)(x+y-am)^{m-2},
% \end{align*}
Differentiating both sides of \eqref{eq-abel} with respect to $x$, we get
\begin{align}\label{eq-abel-x}
\sum_{j=0}^m\binom{m}{j}j(x-a)y(x-aj)^{j-2}(y-am+aj)^{m-j-1}
=m(x+y-a)(x+y-am)^{m-2}.
\end{align}
% \begin{align}  \label{eq-abel-y}
% \sum_{j=0}^m\binom{m}{j}(m-j)x(y-a)(x-aj)^{j-1}\big(y-a(m-j)\big)^{m-j-2}
% =m(x+y-a)(x+y-am)^{m-2}.
% \end{align}
With these formulas, we are able to give the following explicit formula for $|\Des_1(n)|$.

\begin{lemma}\label{lem-abel-2}
For $n>1$, we have
\begin{align*}
  |\Des_1(n)|=\frac{(n+1)(n-1)^{n-5}\cdot (2n-2)!}{6\cdot (n-2)!}.
\end{align*}
\end{lemma}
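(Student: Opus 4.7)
The plan is to split $|\Des_1(n)|$ according to the sizes of the two components, substitute the closed-form count of triangular cacti from \eqref{eq-cactus}, and then evaluate the resulting sum via the Abel identities \eqref{eq-abel} and \eqref{eq-abel-x}. An element of $\Des_1(n)$ is an unordered pair of triangular cacti on a partition of $[2n-2]$ into nonempty subsets of odd sizes $2k-1$ and $2(n-k)-1$ for $1\leq k\leq n-1$, so I would first write
$$|\Des_1(n)|=\frac{1}{2}\sum_{k=1}^{n-1}\binom{2n-2}{2k-1}|\Delta(k)|\cdot|\Delta(n-k)|,$$
and after substituting \eqref{eq-cactus} and setting $j=k-1$ obtain
$$|\Des_1(n)|=\frac{(2n-2)!}{2^{n-1}(n-2)!}\,T_1, \qquad T_1:=\sum_{j=0}^{n-2}\binom{n-2}{j}(2j+1)^{j-2}(2n-3-2j)^{n-j-4}.$$

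The next step is to reduce $T_1$ to a sum whose exponents fit an Abel identity. Since $(2j+1)+(2n-3-2j)=2n-2$, multiplying the summand of $T_1$ by $2n-2$ gives $(2n-2)T_1=T_5+T_6$, where
$$T_5:=\sum_{j=0}^{n-2}\binom{n-2}{j}(2j+1)^{j-1}(2n-3-2j)^{n-j-4} \quad\text{and}\quad T_6:=\sum_{j=0}^{n-2}\binom{n-2}{j}(2j+1)^{j-2}(2n-3-2j)^{n-j-3}.$$
The substitution $j\mapsto n-2-j$ interchanges $T_5$ and $T_6$, so $T_5=T_6$, and hence $T_1=T_5/(n-1)$. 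It remains to compute $T_5$.

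To this end I would invoke two Abel-type evaluations. Applying \eqref{eq-abel} with $a=-2$, $x=y=1$, $m=n-2$ yields
$$B:=\sum_{j=0}^{n-2}\binom{n-2}{j}(2j+1)^{j-1}(2n-3-2j)^{n-j-3}=2(2n-2)^{n-3},$$
while \eqref{eq-abel-x} applied with the same substitution, then reindexed by $j\mapsto n-2-j$, gives
$$A':=\sum_{j=0}^{n-2}\binom{n-2}{j}(n-2-j)(2j+1)^{j-1}(2n-3-2j)^{n-j-4}=\frac{4(n-2)(2n-2)^{n-4}}{3}.$$
Using the algebraic identity $(n-2-j)=\bigl((2n-3-2j)-1\bigr)/2$ in the summand of $A'$ produces $A'=\tfrac{1}{2}(B-T_5)$, which I would solve for $T_5$ to obtain $T_5=\frac{(n+1)\cdot 2^{n-2}(n-1)^{n-4}}{3}$. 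Therefore $T_1=\frac{(n+1)\cdot 2^{n-2}(n-1)^{n-5}}{3}$, and substituting back into the formula for $|\Des_1(n)|$ yields the claimed expression.

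The main obstacle is discovering the two algebraic tricks that make Abel's identities applicable. First, the exponent pair $(j-2,n-j-4)$ appearing in $T_1$ does not match either \eqref{eq-abel} or \eqref{eq-abel-x}; using $(2j+1)+(2n-3-2j)=2n-2$ combined with the symmetry $j\mapsto n-2-j$ bumps it up to the tractable pair $(j-1,n-j-4)$ in $T_5$. Second, the coefficient $n-2-j$ appearing in $A'$ must be rewritten as an affine function of $2n-3-2j$ in order to relate $A'$ to $B$ and $T_5$ linearly. Once both reductions are in place, the evaluation is mechanical.
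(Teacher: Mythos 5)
Your proposal is correct and follows essentially the same route as the paper: the same decomposition of $\Des_1(n)$ into unordered pairs of triangular cacti, the same substitution of the Husimi count \eqref{eq-cactus}, and the same two Abel specializations (\eqref{eq-abel} and \eqref{eq-abel-x} at $a=-2$, $x=y=1$, $m=n-2$, with the reindexing $j\mapsto n-2-j$). The only difference is bookkeeping: you reach the target sum via the $(2j+1)+(2n-3-2j)=2n-2$ splitting and the rewriting $n-2-j=\tfrac{(2n-3-2j)-1}{2}$, whereas the paper takes the linear combination $3\cdot\eqref{eq-abel-s-f}-\eqref{eq-abel-x-s-f}-\eqref{eq-abel-y-s-f}$ directly; both yield the identity \eqref{eq-aim-identity}.
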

\begin{proof}
  For any $m\geq 1$, an element of $\Des_m(n)$ consists of a partition of $[2n-2]$ into $2m$ parts and a triangular cactus on each of those parts.
  This implies that
  $$(2m)!\, |\Des_m(n)| \;\;= \sum_{(2k_1-1)+\cdots+(2k_{2m}-1) = 2n-2}\binom{2n-2}{2k_1-1,\ldots,2k_{2m}-1} \prod_{i=1}^{2m} |\Delta(k_i)|,$$
  where the factor of $(2m)!$ reflects the fact that the parts of the partition are unordered.
  When $m=1$, the above formula simplifies to
  %$$|\Des_1(n)| \;\;= \sum_{(2k_1-1)+(2k_{2}-1) = 2n-2}\binom{2n-2}{2k_1-1} |\Delta(k_1)||\Delta(k_2)|.$$
  %$$|\Des_1(n)| \;\;= \sum_{k_1+k_{2} = n}\binom{2n-2}{2k_1-1} |\Delta(k_1)||\Delta(k_2)|.$$
  $$|\Des_1(n)| = \frac{1}{2}\sum_{r= 1}^{n-1}\binom{2n-2}{2r-1}\, |\Delta(r)|\cdot |\Delta(n-r)|.$$
Substituting \eqref{eq-cactus} into the right hand side and reindexing, we obtain the formula
  \begin{align}%
    |\Des_1(n)|&=\frac{1}{2}\sum_{r=0}^{n-2}\bin{2n-2}{2r+1}\cdot (2r-1)!!(2r+1)^{r-1}\cdot (2n-2r-5)!!(2n-2r-3)^{n-r-3}\label{equ-cx2}\\[5pt]
    &=\sum_{r=0}^{n-2}\frac{(2n-2)!(2r+1)^{r-2}(2n-2r-3)^{n-r-4}}{2^{n-1}\cdot r!\cdot (n-r-2)!}.\nonumber
    \end{align}
    % \;\;&= \frac{1}{2}\sum_{r=0}^{n-2}\binom{2n-2}{2r+1}|\Delta(r+1)|\cdot |\Delta(n-r-1)|\\
  Now it suffices to show that
    \begin{align*}
      \sum_{r=0}^{n-2}\frac{(2n-2)!(2r+1)^{r-2}(2n-2r-3)^{n-r-4}}{2^{n-1}\cdot r!\cdot (n-r-2)!}=\frac{(n+1)(n-1)^{n-5}\cdot (2n-2)!}{6\cdot (n-2)!},
    \end{align*}
    or equivalently that
    \begin{align}\label{eq-aim-identity}
      \sum_{r=0}^{n-2}3\binom{n-2}{r}(2r+1)^{r-2}(2n-2r-3)^{n-r-4} = 2^{n-2}(n+1)(n-1)^{n-5}.
    \end{align}
    To this end, we take $m=n-2$, $a=-2$, $x=1$, and $y=1$ in Equations \eqref{eq-abel} and \eqref{eq-abel-x} to obtain the following
    two equations:
    \begin{align}
      &\sum_{r=0}^{n-2}\binom{n-2}{r}(2r+1)^{r-1}(2n-2r-3)^{n-r-3}=2^{n-2}(n-1)^{n-3},\label{eq-abel-s-f}\\[6pt]
      &\sum_{r=0}^{n-2}\binom{n-2}{r}3r(2r+1)^{r-2}(2n-2r-3)^{n-r-3}=2^{n-2}(n-2)(n-1)^{n-4}\label{eq-abel-x-s-f}.
      \end{align}
    Substituting $r$ for $n-2-r$ into the left hand side of \eqref{eq-abel-x-s-f} yields
    \begin{align}
      \sum_{r=0}^{n-2}\binom{n-2}{r}3(n-r-2)(2r+1)^{r-1}(2n-2r-3)^{n-r-4}=2^{n-2}(n-2)(n-1)^{n-4}.\label{eq-abel-y-s-f}
    \end{align}
    By subtracting \eqref{eq-abel-x-s-f} and \eqref{eq-abel-y-s-f} from \eqref{eq-abel-s-f} multiplied by $3$, we obtain
    the desired \eqref{eq-aim-identity}. This completes the proof.
    \end{proof}

We are now ready to prove Theorem \ref{even top}.

\begin{proof}[Proof of Theorem \ref{even top}.]
  Let $g_n := |\SQSP(2n-1,n)| - |\SQSP(2n-2,n)|$.
  By Proposition \ref{difference}, Proposition \ref{prop_rooted-deserts}, and Lemma \ref{lem-abel-2}, we have
  \begin{align*}
    g_n &=2 \cdot |\RDes_2(n)| + |\RDes_1(n)| - |\Des_1(n)|\\
    &=\frac{(n-1)^{n-4}\cdot (2n-2)!}{6\cdot (n-3)!} + \frac{(n-1)^{n-3}\cdot (2n-2)!}{2\cdot (n-2)!} - \frac{(n+1)(n-1)^{n-5}\cdot (2n-2)!}{6\cdot (n-2)!}\\
    &= (2n-2)!\cdot \frac{ (n-2)(n-1)^{n-4} + 3(n-1)^{n-3} - (n+1)(n-1)^{n-5}}{6\cdot (n-2)!}\\
    &= \frac{2(n-2)(n-1)^{n-5}\cdot (2n-1)!}{6\cdot (n-2)!}\\
    &= \frac{(n-1)^{n-5}\cdot (2n-1)!}{3\cdot (n-3)!}.
  \end{align*}
  % and therefore
  % $$|\SQSP(2n-1,n)| - |\SQSP(2n-2,n)| = \frac{(2n-1)!(n-1)^{n-5}}{3\cdot (n-3)!}.$$
  Combining this with Theorem \ref{odd top} gives the result.
\end{proof}

Finally, we prove Corollary \ref{cor-ssp}.

\begin{proof}[Proof of Corollary \ref{cor-ssp}.]
By Equation \eqref{equ-cx2} and Theorem \ref{even top_en}, we find that
 \begin{align*}
    E_n=|\SQSP(2n-2,n)|-|\Des_1(n)|.
  \end{align*}
Then combining Lemma \ref{lem-abel-2} and  Theorem \ref{even top} gives the desired result.
\end{proof}

\section{Proof of Theorem \ref{Q_leading}}\label{sec-inverse}

The aim of this section is to prove Theorem \ref{Q_leading}.
To this end, we need to use a relation between $Q_{B_n}(t)$ and $P_{B_n}(t)$.
Before recalling this relation, we will follow \cite{Gordon2012MatroidsAG} to introduce some notation from matroid theory.

Let $M=(E,\mathcal{F})$ be a loopless matroid on ground set $E$ with the set of flats $\mathcal{F}$. The lattice of flats of $M$ is denoted by $\mathscr{L}(M)$. For any flat $F$ of $M$, let $M|_F$ denote the restriction of $M$ to $F$, and 
let $M/F$ denote the matroid obtained from $M$ by contracting $F$.
For any subset $I$ of $E$, let $\mathrm{rk}\, I$ denote the rank of $I$ in the matroid $M$. The rank of matroid $M$, denoted by $\mathrm{rk}\, M$, is defined to be $\mathrm{rk}\, E$.
Gao and Xie \cite[Theorem 1.3]{GX} established the following relation between $Q_{M}(t)$ and $P_{M}(t)$:
\begin{align}\label{from_Q_to_P}
P_M(t)=-\sum_{F\in \mathscr{L}(M)\backslash \{E\}}P_{M|_F}(t) \cdot (-1) ^{\mathrm{rk} \, M/ F} Q_{M/ F}(t).
\end{align}

Let $[t^i]f(t)$ denote the coefficient $t^i$ in the polynomial $f(t)$.
Based on \eqref{from_Q_to_P}, Vecchi \cite[Theorem 4.1]{vecchi2021matroid} showed that, for any matroid $M$ of odd rank $2m-1$, we have the identity
\begin{align*}
  {[t^{m-1}]}P_M(t)={[t^{m-1}]}Q_M(t).
\end{align*}
Since the rank of braid matroid $B_{2n}$ is $2n-1$, we have
\begin{align}\label{eq-braid-pq}
[t^{n-1}]P_{\mathrm{B}_{2n}}(t)=[t^{n-1}]Q_{\mathrm{B}_{2n}}(t).
\end{align}
The relationship between the leading coefficients of $P_{B_{2n-1}}(t)$ and $Q_{B_{2n-1}}(t)$ is more subtle; the precise formula appears in the following lemma.

\begin{lemma}\label{odd_P_Q_relation}
For any $n> 1$, we have
\begin{align}\label{relation-P-and-Q}
[t^{n-2}]P_{\mathrm{B}_{2n-1}}(t)+[t^{n-2}]Q_{\mathrm{B}_{2n-1}}(t)
=\sum_{j=1}^{n-1} \binom{2n-1}{2j}
[t^{j-1}]P_{\mathrm{B}_{2j}}(t)   \cdot [t^{n-1-j}]Q_{\mathrm{B}_{2n-2j}}(t).
\end{align}
\end{lemma}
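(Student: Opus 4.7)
The plan is to apply the recursion~\eqref{from_Q_to_P} to $M=B_{2n-1}$, isolate the term indexed by the minimal flat, and identify which of the remaining summands contribute to the coefficient of $t^{n-2}$ by a degree argument. The lattice of flats $\mathscr{L}(B_{2n-1})$ is the partition lattice of $[2n-1]$: if the flat $F$ corresponds to a partition $\pi$ with $k$ blocks of sizes $\lambda_1,\dots,\lambda_k$, then $M|_F\cong B_{\lambda_1}\oplus\cdots\oplus B_{\lambda_k}$ has rank $2n-1-k$, and $M/F\cong B_k$ has rank $k-1$. Using multiplicativity of $P$ under direct sum and the identity $(-1)^{\mathrm{rk}(M/F)+1}=(-1)^k$, equation~\eqref{from_Q_to_P} rewrites as
\[
P_{B_{2n-1}}(t)=\sum_{\pi\neq\hat 1}(-1)^k\Bigl(\prod_{i=1}^k P_{B_{\lambda_i}}(t)\Bigr)Q_{B_k}(t),
\]
where $\hat 1$ denotes the one-block partition.

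Next I would single out the term coming from the discrete partition $\hat 0$, for which $k=2n-1$, $\prod_i P_{B_{\lambda_i}}=1$, and $Q_{B_k}=Q_{B_{2n-1}}$; this contribution equals $-Q_{B_{2n-1}}(t)$. Moving it to the left-hand side and extracting $[t^{n-2}]$ yields
\[
[t^{n-2}]P_{B_{2n-1}}(t)+[t^{n-2}]Q_{B_{2n-1}}(t)=\sum_{\pi\neq\hat 0,\hat 1}(-1)^k[t^{n-2}]\Bigl(\prod_{i=1}^k P_{B_{\lambda_i}}(t)\cdot Q_{B_k}(t)\Bigr).
\]
Using the bounds $\deg P_{B_\lambda}\le\lfloor\lambda/2\rfloor-1$ for $\lambda\ge 2$ (with $P_{B_1}=1$) together with $\deg Q_{B_k}\le\lfloor(k-2)/2\rfloor$, and writing $s_1$ for the number of singleton blocks of $\pi$ and $s'$ for the number of odd blocks of size $\ge 3$, a short case analysis using the parity constraint that $s_1+s'$ is odd (since $\sum_i\lambda_i=2n-1$) shows that the only partitions $\pi\neq\hat 0$ whose summand has degree at least $n-2$ are those with $k=2j$, $s_1=2j-1$, $s'=0$, and exactly one even block of size $2(n-j)$. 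For such $\pi$ the maximal degree equals $n-2$ exactly, so the $t^{n-2}$-coefficient of the summand is the product of leading coefficients $[t^{n-j-1}]P_{B_{2(n-j)}}(t)\cdot[t^{j-1}]Q_{B_{2j}}(t)$.

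Since there are $\binom{2n-1}{2(n-j)}$ such partitions (one per choice of the single even block), summing over $j\in\{1,\dots,n-1\}$ and then re-indexing via $j\mapsto n-j$ (which sends $\binom{2n-1}{2(n-j)}$ to $\binom{2n-1}{2j}$ and swaps the two leading-coefficient factors) produces exactly~\eqref{relation-P-and-Q}. The main obstacle is the degree-rigidity claim in the middle step: one must rule out every partition shape other than ``one even block plus singletons''. This reduces to a short linear inequality argument in $(s_1,s')$ once the cases of odd and even $k$ are separated; once that rigidity is established, the rest of the argument is routine bookkeeping.
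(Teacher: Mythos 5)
Your proposal is correct and follows essentially the same route as the paper: apply the Gao--Xie recursion \eqref{from_Q_to_P} to $B_{2n-1}$, use the partition-lattice description of flats together with multiplicativity of $P$ and $Q$, peel off the $\hat 0$ term as $-Q_{B_{2n-1}}(t)$, and kill all remaining summands except those from ``one even block plus singletons'' by degree bounds. The only cosmetic difference is that you package the degree rigidity as a single linear inequality in the number of non-singleton blocks and odd blocks, where the paper splits it into three explicit cases; your inequality does check out (it forces exactly one non-singleton block, necessarily of even size).
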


\begin{proof}
Taking $M$ to be $B_{2n-1}$ in Equation \eqref{from_Q_to_P} yields
\begin{align}\label{P_Q_relation_coeff}
P_{\mathrm{B}_{2n-1}}(t)+Q_{\mathrm{B}_{2n-1}}(t)
=-\sum_{F\in \mathscr{L}(\mathrm{B}_{2n-1})\backslash \{\emptyset,E\}}
P_{\mathrm{B}_{2n-1}|_F}(t) \cdot (-1) ^{\mathrm{rk} \, \mathrm{B}_{2n-1}/F} Q_{\mathrm{B}_{2n-1}/F}(t) .
\end{align}
We now compare coefficients of $t^{n-2}$ on both sides of Equation \eqref{P_Q_relation_coeff}. It suffices to show that
\begin{align}
\sum_{F\in \mathscr{L}(\mathrm{B}_{2n-1})\backslash \{\emptyset,E\}}
[t^{n-2}]&\Big(P_{\mathrm{B}_{2n-1}|_F}(t) \cdot (-1) ^{\mathrm{rk} \, \mathrm{B}_{2n-1}/F} Q_{\mathrm{B}_{2n-1}/F}(t) \Big)\nonumber\\
=&-\sum_{j=1}^{n-1} \binom{2n-1}{2j}
[t^{j-1}]P_{\mathrm{B}_{2j}}(t)\cdot [t^{n-1-j}]Q_{\mathrm{B}_{2n-2j}}(t).\label{eq-equatingcoeff}
\end{align}

% Let us examine which flats $F$ other than $\emptyset$ and $E$ can contribute to the left hand side of the above equation.
The lattice of $\mathscr{L}(B_k)$ is isomorphic to the lattice of set-theoretic partitions of the set $[k]$, with the minimal element $\emptyset$ corresponding to the partition of $[k]$ into $k$ singletons
and the maximal element $E$ corresponding to the partition of $[k]$ into a single part.
We say that $F \in \mathscr{L}(\mathrm{B}_k)$ is of type $\lambda$ if
the partition $\lambda$ can be obtained by arranging the sizes of the blocks of the corresponding set partition
in descending order. If $F$ is of type $\lambda=(\lambda_1,\lambda_2,\ldots,\lambda_{\ell(\lambda)})\vdash k$, then (after simplification) we have
\begin{align}\label{eq-isomorphism}
\mathrm{B}_{k}|_F \cong \mathrm{B}_{\lambda_1}\oplus\mathrm{B}_{\lambda_2}\oplus \dots\oplus\mathrm{B}_{\lambda_{\ell(\lambda)}} \quad \mbox{ and } \quad \mathrm{B}_{k}/F\cong \mathrm{B}_{\ell(\lambda)}.
\end{align}
By {\cite[Theorem 2.2 and Proposition 2.7]{EPW}} and \cite[Theorem 1.2]{GX}, we have
\begin{align}\label{eq-isomorphism-kl}
P_{\mathrm{B}_{k}|_F}(t)=P_{\mathrm{B}_{\lambda_1}}(t) P_{\mathrm{B}_{\lambda_2}}(t) \cdots P_{\mathrm{B}_{\lambda_{\ell(\lambda)}}}(t) \quad \mbox{ and } \quad Q_{\mathrm{B}_{k}/F}(t)=Q_{\mathrm{B}_{\ell(\lambda)}}(t).
\end{align}

% We proceed to consider the contribution of the flat $F$ other than $\emptyset$ and $E$  to the left hand side of \eqref{eq-equatingcoeff} according to the type of $F$. Since $\emptyset$ is of type $(1^{2n-1})$ and $E$ is of type $(2n-1)$, there are three cases to consider.
Let $F$ be a nonempty proper flat of $B_{2n-1}$, and let $\lambda\vdash 2n-1$ be the type of $F$.  We will analyze the summand of Equation \eqref{eq-equatingcoeff} indexed by $F$ according to the following cases. 
\\\\
Case I: $\lambda=(2j,1^{2n-1-2j})$ for some $1\leq j \leq n-1$. By Equation \eqref{eq-isomorphism-kl} and the fact $P_{B_1}(t)=1$, we have
\begin{align*}
P_{\mathrm{B}_{2n-1}|_F}(t) \cdot (-1) ^{\mathrm{rk} \, \mathrm{B}_{2n-1}/F} Q_{\mathrm{B}_{2n-1}/F}(t)=
-P_{\mathrm{B}_{2j}}(t)\cdot Q_{\mathrm{B}_{2n-2j}}(t).
\end{align*}
Since $\deg P_{\mathrm{B}_{2j}}(t)  \leq j-1$ and $\deg Q_{\mathrm{B}_{2n-2j}}(t) \leq n-j-1$, we have
\begin{align*}
[t^{n-2}] \Big(P_{\mathrm{B}_{2n-1}|_F}(t) \cdot (-1) ^{\mathrm{rk} \, \mathrm{B}_{2n-1}/F} Q_{\mathrm{B}_{2n-1}/F}(t)\Big)
=-[t^{j-1}]P_{\mathrm{B}_{2j}}(t)   \cdot [t^{n-j-1}]Q_{\mathrm{B}_{2n-2j}}(t).
\end{align*}
Case II: $\lambda=(2j-1,1^{2n-2j})$ for some $2\leq j \leq n-1$.  This time, we have
\begin{align*}
P_{\mathrm{B}_{2n-1}|_F}(t) \cdot (-1) ^{\mathrm{rk} \, \mathrm{B}_{2n-1}/F} Q_{\mathrm{B}_{2n-1}/F}(t)=
P_{\mathrm{B}_{2j-1}}(t)\cdot Q_{\mathrm{B}_{2n-2j+1}}(t).
\end{align*}
Since $\deg P_{\mathrm{B}_{2j}}(t)  \leq j-2$ and $\deg Q_{\mathrm{B}_{2n-2j+1}}(t) \leq n-j-1$, we have
\begin{align*}
[t^{n-2}] \Big(P_{\mathrm{B}_{2n-1}|_F}(t) \cdot (-1) ^{\mathrm{rk} \, \mathrm{B}_{2n-1}/F} Q_{\mathrm{B}_{2n-1}/F}(t)\Big)
=0.
\end{align*}
Case III: $\lambda=(\lambda_1,\lambda_2,\ldots,\lambda_{i},1^{2n-1-\sum_{j=1}^i\lambda_j})$ for some $i\geq 2$ and $\lambda_i>1$. Now we have
\begin{align*}
P_{\mathrm{B}_{2n-1}|_F}(t) \cdot (-1) ^{\mathrm{rk} \, \mathrm{B}_{2n-1}/F} Q_{\mathrm{B}_{2n-1}/F}(t)=
(-1)^{2n+i-\sum_{j=1}^i\lambda_j}P_{\mathrm{B}_{\lambda_1}}(t)\cdots P_{\mathrm{B}_{\lambda_i}}(t)\cdot Q_{\mathrm{B}_{2n+i-1-\sum_{j=1}^i\lambda_j}}(t).
\end{align*}
Since $\deg P_{\mathrm{B}_{k}}(t)\leq \frac{k-2}{2}$ and $\deg Q_{\mathrm{B}_{k}}(t)\leq \frac{k-2}{2}$ for any $k\geq 2$,
$$\deg (P_{\mathrm{B}_{\lambda_1}}(t)\cdots P_{\mathrm{B}_{\lambda_i}}(t))= \deg P_{\mathrm{B}_{\lambda_1}}(t)+\cdots+\deg P_{\mathrm{B}_{\lambda_i}}(t) \leq \frac{\sum_{j=1}^i\lambda_j-2i}{2}$$
and
$$\deg Q_{\mathrm{B}_{2n+i-1-\sum_{j=1}^i\lambda_j}}(t) \leq \frac{2n+i-3-\sum_{j=1}^i\lambda_j}{2}.$$
Since $i\geq 2$, we have
\begin{align*}
  \deg P_{\mathrm{B}_{\lambda_1}}(t)\cdots P_{\mathrm{B}_{\lambda_i}}(t)+\deg Q_{\mathrm{B}_{2n+i-1-\sum_{j=1}^i\lambda_j}}(t)\leq \frac{2n-i-3}{2}\leq \frac{2n-5}{2}<n-2.
\end{align*}
Thus
\begin{align*}
[t^{n-2}] \Big(P_{\mathrm{B}_{2n-1}|_F}(t) \cdot (-1) ^{\mathrm{rk} \, \mathrm{B}_{2n-1}/F} Q_{\mathrm{B}_{2n-1}/F}(t)\Big)
=0.
\end{align*}

Combining the above three cases, we find that only those flats of type $\lambda=(2j,1^{2n-1-2j})$ can contribute
to the left hand side of \eqref{eq-equatingcoeff}. Note that, for each $1\leq j \leq n-1$, there are exactly $\binom{2n-1}{2j}$ flats of type $\lambda=(2j,1^{2n-1-2j})$. This completes the proof of Equation \eqref{eq-equatingcoeff}, and hence that of the lemma.
\end{proof}

Now we are ready to prove Theorem \ref{Q_leading}.

\begin{proof}[Proof of Theorem \ref{Q_leading}.]
By Equation \eqref{eq-braid-pq} and Theorem \ref{odd top}, we see that
\begin{align}\label{eq-even-Q-leading}
  [t^{n-1}]P_{\mathrm{B}_{2n}}(t)=[t^{n-1}]Q_{\mathrm{B}_{2n}}(t)
=(2n-1)^{n-2}\cdot (2n-3)!!=\frac{(2n-1)!(2n-1)^{n-3}}{2^{n-1}\cdot(n-1)!},
\end{align}
and by Theorem \ref{even top}, we have
\begin{align}\label{eq-odd-P-leading}
  [t^{n-2}]P_{\mathrm{B}_{2n-1}}(t)=\frac{(2n-1)!(2n-1)^{n-3}}{2^{n-1}\cdot (n-1)!} -  \frac{(n-2)(n-1)^{n-5}\cdot (2n-1)!}{3\cdot (n-2)!}.
\end{align}
It remains only to show that
\begin{align*}%\label{eq-odd-Q-leading}
[t^{n-2}]Q_{\mathrm{B}_{2n-1}}(t)
=\frac{(2n-1)! (n-1)^{n-5}}{3\cdot (n-2)!}.
\end{align*}
By Equations \eqref{eq-even-Q-leading}, \eqref{eq-odd-P-leading}, and \eqref{relation-P-and-Q}, this is equivalent to the statement that
\begin{align*}
\frac{(2n-1)! (n-1)^{n-5}}{3\cdot (n-2)!}
=&\sum_{j=1}^{n-1} \binom{2n-1}{2j}
\frac{(2j-1)!(2j-1)^{j-3}}{2^{j-1}\cdot (j-1)!}
\cdot\frac{(2n-2j-1)!(2n-2j-1)^{n-j-3}}{2^{n-j-1}\cdot (n-j-1)!}\\
&-\Big( \frac{(2n-1)!(2n-1)^{n-3}}{2^{n-1}\cdot (n-1)!}-\frac{(2n-1)!(n-1)^{n-5}}{3\cdot (n-3)!} \Big),
\end{align*}
which simplifies further to the equation
\begin{align}\label{eq-3}
\sum_{j=0}^{n-1} 3\binom{{n-1}}{j}{(2j-1)^{j-3}}{(2n-2j-1)^{n-j-3}}
=-8(n-3)(2n-2)^{n-4}.
\end{align}
To prove Equation \eqref{eq-3}, we will first establish the following two identities:
\begin{align}
  \sum_{j=0}^m 3\binom{m}{j}(2j-1)^{j-2}(2m-2j+1)^{m-j-2}&=8(2m)^{m-2}, \label{comb-12-1}\\
  \sum_{j=0}^m 3\binom{m}{j}(4mj-4j+1)(2j-1)^{j-3}(2m-2j+1)^{m-j-2}
  &=8(2m^2+m-2)(2m)^{m-3}.\label{comb-23}
\end{align}

Let us first prove Equation \eqref{comb-12-1}.
Differentiating both sides of Equation \eqref{eq-abel} with respect to $y$, we obtain the identity
\begin{align}  \label{eq-abel-y}
\sum_{j=0}^m\binom{m}{j}(m-j)x(y-a)(x-aj)^{j-1}(y-am+aj)^{m-j-2}
=m(x+y-a)(x+y-am)^{m-2}.
\end{align}
Letting $a=-2, x=-1, y=1$ in Equations \eqref{eq-abel}, \eqref{eq-abel-x}, and \eqref{eq-abel-y}, we obtain the following:
\begin{align}
\sum_{j=0}^m\binom{m}{j}(2j-1)^{j-1}(2m-2j+1)^{m-j-1}
&=0,\label{eq-abel-s}\\[6pt]
\sum_{j=0}^m \binom{m}{j}j(2j-1)^{j-2}(2m-2j+1)^{m-j-1}
&=(2m)^{m-1},\label{eq-abel-x-s}\\[6pt]
\sum_{j=0}^m -3\binom{m}{j}(m-j)(2j-1)^{j-1}(2m-2j+1)^{m-j-2}
&=(2m)^{m-1}.\label{eq-abel-y-s}
\end{align}

Now, subtracting Equation \eqref{eq-abel-s} from Equation \eqref{eq-abel-x-s} multiplied by $2$ yields
\begin{align}\label{comb-1}
\sum_{j=0}^m\binom{m}{j} (2j-1)^{j-2}(2m-2j+1)^{m-j-1}&=2(2m)^{m-1},
\end{align}
and adding Equation \eqref{eq-abel-s} multiplied by $3$ to Equation \eqref{eq-abel-y-s} multiplied by $2$ yields
\begin{align}\label{comb-2}
\sum_{j=0}^m&3\binom{m}{j} (2j-1)^{j-1}(2m-2j+1)^{m-j-2}=2(2m)^{m-1}.
\end{align}
Furthermore, adding Equation  \eqref{comb-1}  multiplied by $3$ to Equation \eqref{comb-2} and then cancelling the common factor $2m$ lead to
the desired Equation \eqref{comb-12-1}.

In the same manner we can prove Equation \eqref{comb-23}. Differentiating Equation \eqref{eq-abel-y} with respect to $x$, we have
\begin{align}
\sum_{j=0}^m\binom{m}{j}j(m-j)(x-a)(y-a)&(x-aj)^{j-2}(y-am+aj)^{m-j-2}\nonumber\\
&
=m(m-1)(x+y-2a)(x+y-am)^{m-3}.\label{eq-abel-y-x}
\end{align}
Differentiating again gives
\begin{align}
\sum_{j=0}^m\binom{m}{j}j(j-1)(m-j)(x-2a)(y-a)
&(x-aj)^{j-3}(y-am+aj)^{m-j-2}\nonumber\\
&
=m(m-1)(m-2)(x+y-3a)(x+y-am)^{m-4}.\label{eq-abel-y-x-x}
\end{align}
Letting $a=-2, x=-1, y=1$ in Equations \eqref{eq-abel-y-x} and \eqref{eq-abel-y-x-x}, we obtain
\begin{align}
\sum_{j=0}^m3\binom{m}{j}j(m-j)(2j-1)^{j-2}(2m-2j+1)^{m-j-2}
&=2(m-1)(2m)^{m-2},\label{eq-abel-y-x-s}\\[6pt]
\sum_{j=0}^m 3\binom{m}{j}j(j-1)(m-j)(2j-1)^{j-3}(2m-2j+1)^{m-j-2}
&=(m-1)(m-2)(2m)^{m-3}.\label{eq-abel-y-x-x-s}
\end{align}
Then, subtracting Equation \eqref{eq-abel-y-x-x-s} multiplied by $2$ from Equation \eqref{eq-abel-y-x-s} yields
\begin{align}\label{comb-3}
  \sum_{j=0}^m3\binom{m}{j}j(m-j)(2j-1)^{j-3}(2m-2j+1)^{m-j-2} =2(m-1)(m+2)(2m)^{m-3}.
\end{align}
Adding Equation \eqref{comb-2} to Equation \eqref{comb-3} multiplied by $4$, we get Equation \eqref{comb-23}, as desired.

Now we can derive Equation \eqref{eq-3} from Equations \eqref{comb-12-1} and \eqref{comb-23}. By subtracting Equation \eqref{comb-12-1} multiplied by $2m-2$ from Equation \eqref{comb-23} and then cancelling the common factor $2m-1$, we find that
\begin{align*}
&\sum_{j=0}^m3\binom{m}{j} {(2j-1)^{j-3}}{\big(2m-2j+1\big)^{m-j-2}}
=-8 (m-2)(2m)^{m-3}.
\end{align*}
Substituting $m$ to $n-1$ in the above formula yields Equation \eqref{eq-3}.
This completes the proof.
\end{proof}

\bibliography{./symplectic}
\bibliographystyle{amsalpha}

\end{document}